\DeclareMathOperator{\mmod}{mod}
\DeclareMathOperator{\Pol}{R}
\DeclareMathOperator{\Log}{G}
\DeclareMathOperator{\Li}{Li}
\newtheorem{lemma}{Lemma}
\newtheorem*{lemma*}{Lemma}
\newtheorem{theorem}{Theorem}
\newtheorem*{theorem*}{Theorem}
\newtheorem{proposition}{Proposition}
\newtheorem*{remark*}{Remark}
\newtheorem{corollary} {Corollary}
\newtheorem*{corollary*} {Corollary}
\newcommand*{\bfrac}[2]{\genfrac{}{}{0pt}{}{#1}{#2}}
{}{}{}
\global\long\def\veps{\varepsilon}
\title{A logarithmic improvement in the Bombieri-Vinogradov theorem}
\date{}
\author{Alisa Sedunova}
\begin{document}

\maketitle

\begin{abstract}
In this paper we improve the best known to date result of \cite{DIT}, getting $(\log x)^2$ instead of $(\log x)^{\frac{5}{2}}$. We use a weighted form of Vaughan's identity, allowing a smooth truncation inside the procedure, and an estimate due to Barban-Vehov \cite{Barban2009} and Graham \cite{Michigan1978} related to Selberg's sieve. We give effective and non-effective versions of the result. From that one can derive the fully effective Bombieri-Vinogradov theorem for $q \leq x^{\frac{1}{2}-\veps}$. The ineffectivity is avoided by applying an effective result by Landau and Page for small moduli $q$ instead using Siegel-Walfisz theorem. \footnote{MSC: 11N3, 11N37, 11N60}
\end{abstract}

\section{Introduction}
For integer number $a$ and $q \geq 1$, let 
$$
\psi(x; q, a)=\sum_{\substack{n\leq x \\ {n\equiv a (\mmod{q})}}} \Lambda(n),
$$
where $\Lambda(n)$ is the von Mangoldt function. 
The Bombieri-Vinogradov theorem is an estimate for the error terms in the prime number theorem for arithmetic progressions averaged over all $q$ up to $x^{1/2}$, or, rather almost all $q$ up to $x^{\frac{1}{2}}$. 
\begin{theorem*}[Bombieri-Vinogradov]
Let $A$ be a given positive number and $Q\leq \frac{x^{1/2}}{(\log{x})^{B}},$ where
$B=B(A)$. Then
$$\sum_{q \leq Q} \max_{2 \leq y \leq x} {\max_{\bfrac{a}{(a,q)=1}}} \left|\psi(y,q,a) - \frac{y}{\varphi(q)} \right| \ll_A \frac{x}{(\log{x})^A}.$$
\end{theorem*}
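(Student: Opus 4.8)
I would run the classical argument: reduce to a mean value of character sums, peel off the small moduli (treated by the Siegel--Walfisz theorem, or by the Landau--Page estimate in the effective version), and bound what remains by an arithmetic identity for $\Lambda$ together with the large sieve for Dirichlet characters. By orthogonality, for $(a,q)=1$,
$$\psi(y;q,a)-\frac{y}{\varphi(q)}=\frac{1}{\varphi(q)}\sum_{\chi\bmod q}\overline{\chi}(a)\bigl(\psi(y,\chi)-\delta_{\chi}y\bigr),\qquad \psi(y,\chi):=\sum_{n\le y}\Lambda(n)\chi(n),$$
where $\delta_{\chi}=1$ precisely for $\chi$ principal. Taking absolute values, bounding $|\overline{\chi}(a)|\le1$, replacing each $\chi$ by the primitive character inducing it (at negligible cost), removing the principal characters by the prime number theorem, and rearranging the outer sum over $q\le Q$ according to the conductor $d$, the theorem reduces to
$$\sum_{d\le Q}\frac{\log(2Q/d)}{\varphi(d)}\sum^{*}_{\chi\bmod d}\max_{2\le y\le x}\bigl|\psi(y,\chi)\bigr|\ll_{A}\frac{x}{(\log x)^{A}},$$
the star indicating primitive characters. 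For $d\le(\log x)^{C}$ with $C=C(A)$ large, the Siegel--Walfisz theorem gives $\psi(y,\chi)\ll x\exp(-c\sqrt{\log x})$ uniformly, so those $d$ contribute at most $(\log x)^{O(1)}x\exp(-c\sqrt{\log x})\ll x/(\log x)^{A}$; in the effective version one instead invokes the classical zero-free region together with the Landau--Page bound for the (at most one) exceptional modulus. It remains to handle $(\log x)^{C}<d\le Q$.

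For that range I expand $\psi(y,\chi)$ by Vaughan's identity with parameters $U,V$ (or, more conveniently for the combinatorics, by Heath--Brown's identity), writing it as a bounded combination of: a trivially small piece of size $\ll U$; \emph{Type I} sums $\sum_{h\le UV}a_{h}\chi(h)\sum_{m\le y/h}\chi(m)$ with $|a_{h}|\ll\tau(h)(\log x)^{O(1)}$; and \emph{Type II} bilinear sums $\sum_{U<m}\sum_{V<k}\alpha_{m}\beta_{k}\chi(mk)$ restricted to $mk\le y$, with $\alpha_{m}=\Lambda(m)$, $|\beta_{k}|\le\tau(k)$, cut into $O((\log x)^{2})$ dyadic blocks $m\sim M$, $k\sim K$, $MK\asymp x$. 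In a Type I sum the inner character sum over the long interval $m\le y/h$ is $\ll\sqrt{d}\log d$ by the P\'olya--Vinogradov inequality, whence the sum is $\ll UV\sqrt{d}\,(\log x)^{O(1)}$; inserting this into the weighted sum over $d$ and using $\sum_{d\le Q}\varphi(d)^{-1}\sqrt{d}\ll Q^{3/2}$ gives a total $\ll UV\,Q^{3/2}(\log x)^{O(1)}$, which is $\ll x/(\log x)^{A}$ as soon as $UV$ is bounded by a small fixed power of $x$ and $Q\le x^{1/2}(\log x)^{-B}$ with $B=B(A)$ large.

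The main point is the Type II sum. After removing the truncation $mk\le y$ and the maximum over $y$ at the cost of $(\log x)^{O(1)}$ (by a Mellin transform or a partial-summation device), a dyadic block factors as $\alpha(\chi)\beta(\chi)$, $\alpha(\chi)=\sum_{m\sim M}\alpha_{m}\chi(m)$, $\beta(\chi)=\sum_{k\sim K}\beta_{k}\chi(k)$. Applying Cauchy--Schwarz over the pairs $(d,\chi)$ and then the large sieve inequality for primitive characters to each factor, \emph{dyadically in $d$} (so that, for the weight $\varphi(d)^{-1}$ and the restriction $d>(\log x)^{C}$, the usual large-sieve loss $d+Q^{2}$ is effectively replaced by $M(\log x)^{-C}+Q$), a block is bounded by
$$(\log x)^{O(1)}\Bigl(\sum_{m\sim M}|\alpha_{m}|^{2}\bigl(M(\log x)^{-C}+Q\bigr)\Bigr)^{1/2}\Bigl(\sum_{k\sim K}|\beta_{k}|^{2}\bigl(K(\log x)^{-C}+Q\bigr)\Bigr)^{1/2}.$$
Since $\sum_{m\sim M}|\alpha_{m}|^{2}\ll M(\log x)^{O(1)}$, $\sum_{k\sim K}|\beta_{k}|^{2}\ll K(\log x)^{O(1)}$ and $MK\asymp x$, the \emph{balanced} blocks $M,K\asymp x^{1/2}$ contribute $\ll x(\log x)^{O(1)-\min(B,C)}$, which is $\ll x/(\log x)^{A}$ once $B$ and $C$ are large in terms of $A$; the \emph{unbalanced} blocks, where one variable is a genuine power of $x$ above $x^{1/2}$, are recast into Type I sums with a long free variable by re-expanding the relevant coefficient sequence (writing $\beta_{k}=-\sum_{e\mid k,\,e\le V}\mu(e)$ and absorbing the short variables into one) and then estimated as in the previous paragraph — here the clean bookkeeping is what makes Heath--Brown's identity a smoother choice than Vaughan's.

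\textbf{Main obstacle.} The delicate part is the Type II analysis: organising the dyadic decomposition so that the large sieve is essentially lossless, and disposing of the unbalanced blocks by the reflection to Type I. Summing the $O((\log x)^{2})$ Type II blocks, the Type I contribution, and the small-moduli bound, and choosing $U$, $V$, $B$, $C$ suitably in terms of $A$, yields the theorem. The same circle of ideas, pushed quantitatively, gives the sharpened form stated in the abstract: running the identity with a \emph{smooth} truncation assembled from the Barban--Vehov/Graham weights $\lambda_{d}$, for which the Selberg-sieve bound $\sum_{k\le K}\bigl(\sum_{d\mid k}\lambda_{d}\bigr)^{2}\ll K/\log z$ holds, removes the surplus powers of $\log x$ and replaces $(\log x)^{5/2}$ by $(\log x)^{2}$.
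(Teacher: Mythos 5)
Your overall route is the same as the paper's: orthogonality, reduction to primitive characters, Siegel--Walfisz for conductors below a power of $\log x$, an arithmetic identity for $\Lambda$ with P\'olya--Vinogradov on the Type I pieces and the bilinear large sieve on the Type II pieces, and (as you note at the end) the Barban--Vehov/Graham weight --- i.e.\ the paper's weighted Vaughan identity (Lemma \ref{vaughanweighted}) together with Corollary \ref{graham2} --- to save the extra logarithms. The gap is in your Type II estimate, which you yourself identify as the main point. Replacing the dyadic large-sieve factor $\frac1D(M+D^2)^{1/2}(K+D^2)^{1/2}$ (moduli $d\sim D$, $(\log x)^C<D\le Q$) by $(M(\log x)^{-C}+Q)^{1/2}(K(\log x)^{-C}+Q)^{1/2}$ is genuinely lossy: the true cross term, after inserting $\sum|\alpha_m|^2\ll M\log x$ and $\sum|\beta_k|^2\ll K(\log x)^{O(1)}$ and $MK\asymp x$, is of size $x^{1/2}(M^{1/2}+K^{1/2})$ up to logs, whereas your bound inflates it to about $x^{1/2}Q^{1/2}\max(M,K)^{1/2}(\log x)^{-C/2}$, an extra factor of roughly $x^{1/4}$. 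Hence your displayed bound only closes for blocks with $\max(M,K)\ll x^{1/2}(\log x)^{O(B+C)}$, and everything else is delegated to the ``recast into Type I'' device. That device fails in cases that necessarily occur: in Vaughan's identity the Type II range is $U<m\le x/V$, $V<k\le x/U$ with $\alpha_m=\Lambda(m)$, so there are blocks whose long side (as large as $x/V$, a genuine power of $x$ beyond $x^{1/2}$ once $V\le x^{1/2-\veps}$) carries $\Lambda$, which is not a short divisor sum and cannot be re-expanded; and even when the long side carries $\beta_k$, writing $k=ef$ with $e\le V$ only guarantees a smooth free variable of length $\ge K/V$, which for $K$ just above $x^{1/2}$ and $V$ a fixed power of $x$ is far too short for P\'olya--Vinogradov to help (you need the free variable to exceed roughly $Q^{1/2}(\log x)^{A}$). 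Heath--Brown's identity removes the first objection for variables longer than $x^{1/2}$, but not the second, nor the moderately unbalanced blocks built from several $\mu$-variables each below $x^{1/2}$ (e.g.\ three factors near $x^{1/3}$), so as written the argument does not close precisely where you say the difficulty lies.

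The repair is standard and is what the paper does: do not replace $D$ by $Q$ in both Cauchy--Schwarz factors. Using the bilinear large sieve in the maximal form of Lemma \ref{sieve}, each dyadic Type II block is bounded, up to logarithms, by $x/D+x^{1/2}M^{1/2}+x^{1/2}K^{1/2}+x^{1/2}D$ (or, with the weight $q/\varphi(q)$ kept as in Proposition \ref{vaug2}, by the analogous terms $x+QxU^{-1/2}+QxV^{-1/2}+Q^2x^{1/2}$), and since Vaughan's ranges force $M\le x/V$ and $K\le x/U$, taking $U,V$ to be suitable fixed powers of $x$ makes all of these admissible for $Q\le x^{1/2}(\log x)^{-B}$ with $B=B(A)$ large --- no recasting of unbalanced blocks is needed at all. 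With that correction your Type I accounting (note the relevant sum is really $\sum_{d\le Q}\varphi(d)^{-1}\sqrt{d}\,\#\{\chi \text{ primitive mod } d\}\ll Q^{3/2}$, which is what you used) and your small-moduli step go through, and quantifying the same argument with the smooth weight $\eta$ and Corollary \ref{graham2} is exactly how the paper reaches the exponent $2$ in place of $5/2$.
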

The implied constant in this theorem is not effective, since we have to take care of characters associated with those $q$ that have small prime factors. At the same time, effective versions - in which the effect of an exceptional character is avoided in one way or another - have been known since \cite{Len2016} and \cite{Timofeev}, and, very recently, \cite{Liu2017}.
We state the main result of this paper.

\begin{theorem} [Bombieri-Vinogradov, ineffective]\label{bvcor}
Let $A$ be a positive number and $Q \leq \frac{x^{1/2}}{(\log x)^A}$. Then we have the following bound 
\begin{equation*}
\begin{split}
\sum_{q \leq Q} \max_{2 \leq y \leq x} {\max_{\bfrac{a}{(a,q)=1}}} \left|\psi(y,q,a) - \frac{y}{\varphi(q)}  \right| \ll_A \frac{x}{(\log x)^{A-2}}.
\end{split}
\end{equation*}
\end{theorem}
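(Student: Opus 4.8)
The plan is to run the Bombieri--Vinogradov machinery but with the sharp truncations inside it replaced by smooth ones, so that only two logarithms are spent, and then to read off Theorem~\ref{bvcor} from the resulting mean value over primitive characters. First I would perform the standard character reduction. Writing $\psi(y,\chi)=\sum_{n\le y}\Lambda(n)\chi(n)$, orthogonality of Dirichlet characters gives, for $(a,q)=1$,
$$\psi(y;q,a)-\frac{y}{\varphi(q)}=\frac{1}{\varphi(q)}\sum_{\chi\neq\chi_0}\bar\chi(a)\,\psi(y,\chi)+\frac{1}{\varphi(q)}\bigl(\psi(y,\chi_0)-y\bigr),$$
and since $\psi(y,\chi_0)-y=(\psi(y)-y)+O(\log q\log y)$, the last term summed over $q\le Q$ is $\ll x\exp(-c\sqrt{\log x})\log x+(\log x)^{3}$ by the prime number theorem with classical error term, hence negligible. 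Passing from each $\chi\neq\chi_0$ to the primitive character $\chi^{*}\bmod q^{*}$ that induces it ($q^{*}\mid q$, $q^{*}>1$), using $|\psi(y,\chi)-\psi(y,\chi^{*})|\ll(\log qx)^{2}$, $\varphi(q^{*}r)\ge\varphi(q^{*})\varphi(r)$ and $\sum_{r\le Q/q^{*}}1/\varphi(r)\ll\log x$, the left-hand side of Theorem~\ref{bvcor} is
$$\ll (\log x)\,\Sigma+x\exp(-c\sqrt{\log x}),\qquad \Sigma:=\sum_{q\le Q}\frac{1}{\varphi(q)}\sum_{\chi\bmod q}^{*}\ \max_{2\le y\le x}|\psi(y,\chi)|.$$
This factor $\log x$ is the first of the two logarithms in the exponent $A-2$; the second will come from $\Sigma$.

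It therefore suffices to show $\Sigma\ll x/(\log x)^{A-1}+x\exp(-c\sqrt{\log x})$ whenever $Q\le x^{1/2}/(\log x)^{A}$. I would split the $q$-range at $R=(\log x)^{2A+4}$. For $q\le R$ the Siegel--Walfisz theorem bounds $\max_{y\le x}|\psi(y,\chi)|$ by $x\exp(-c'\sqrt{\log x})$ uniformly over primitive $\chi\bmod q$, and as there are $\ll R^{2}$ such characters this part is absorbed into the error term; this is the sole source of ineffectivity, and in the effective version it is replaced by the Landau--Page theorem, which isolates at most one exceptional modulus, whose contribution is then estimated directly. For $R<q\le Q$ I would apply a \emph{weighted} form of Vaughan's identity, with the parameters taken to be small powers of $x$: $\psi(y,\chi)$ breaks into $O(1)$ Type~I sums $\sum_{n\le z}\bigl(\sum_{d\mid n}\lambda_d\bigr)\chi(n)$ and Type~II bilinear sums $\sum_m\sum_k\Lambda(m)\bigl(\sum_{d\mid k}\lambda_d\bigr)\chi(mk)$ with $m,k$ in dyadic blocks, the weights $(\lambda_d)$ being the Barban--Vehov logarithmic weights, which replace the sharp divisor truncation $\sum_{d\mid k,\,d\le V}\mu(d)$ of the plain identity. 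The Type~I sums are treated by partial summation together with P\'olya--Vinogradov and the multiplicative large sieve; the Type~II sums by dyadic decomposition, Cauchy--Schwarz and the large sieve inequality $\sum_{q\le Q}\frac{q}{\varphi(q)}\sum_{\chi\bmod q}^{*}\bigl|\sum_{m\le M}a_m\chi(m)\bigr|^{2}\ll(Q^{2}+M)\sum_m|a_m|^{2}$, the weight $1/q$ being restored by partial summation in $q$. At both places the Barban--Vehov/Graham estimate \cite{Barban2009,Michigan1978} for the mean square $\sum_{n\le N}\bigl(\sum_{d\mid n}\lambda_d\bigr)^{2}$ saves exactly the logarithmic factor that a sharp cutoff at $V$ would cost, so that---keeping the $q/\varphi(q)$-weighted large sieve in force, so no further $\sum_q1/\varphi(q)$ reappears---$\Sigma$ carries only a single logarithm; together with the imprimitivity factor this produces the total $(\log x)^{2}$, in place of the $(\log x)^{5/2}$ of \cite{DIT}. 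Uniformity in $y\le x$ is maintained throughout by a Gallagher-type completion.

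The hard part will be the Type~II estimate, and in particular the accounting of logarithms in it. One must arrange the weighted Vaughan decomposition so that a Barban--Vehov weight sits on the factor that is squared in the Cauchy--Schwarz step, and then verify that the smoothing leaks no logarithm back---neither through the tails of the weights and the transition region $d\approx V$, nor through the passage from the coupled range $mk\le y$ to completed dyadic blocks, nor through the choice of Vaughan parameters relative to $Q$. The remaining ingredients---the character reduction, the Siegel--Walfisz (or Landau--Page) input for small moduli, the Type~I bounds, and the $y$-uniformity---are routine; it is the tightness of this log count that is the whole content, and bringing the total exponent down from $\tfrac52$ to $2$ is precisely what produces the exponent $A-2$ when $Q\le x^{1/2}/(\log x)^{A}$.
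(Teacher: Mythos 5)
Your overall architecture (character reduction, Siegel--Walfisz for small conductors, weighted Vaughan identity with Barban--Vehov/Graham weights, P\'olya--Vinogradov for Type I, bilinear large sieve for Type II) is the same as the paper's. But there is a genuine gap in the final log-accounting, precisely the point you yourself flag as the whole content. You bound the imprimitivity sum $\sum_{r\le Q/q^{*}}1/\varphi(r)$ uniformly by $\log x$, pull that factor out front, and then require $\Sigma=\sum_{q\le Q}\frac{1}{\varphi(q)}\sum^{*}_{\chi}\max_{y\le x}|\psi'(y,\chi)|\ll x/(\log x)^{A-1}$. That target is not attainable by the tools you describe. The mean value you can actually prove (the paper's Proposition \ref{vaug2}, and your own Type I/II machinery gives nothing stronger) is $\sum_{q\le T}\frac{q}{\varphi(q)}\sum^{*}_{\chi}\max_{y\le x}|\psi(y,\chi)|\ll (x+T^{2}x^{1/2}+Tx^{\frac{13}{14}+\veps})(\log x)^{2}$; dividing by $q$ and summing dyadically over $R<q\le Q$ yields only
$$\Sigma \ll Qx^{1/2}(\log x)^{2}+\frac{x}{R}(\log x)^{2}+x^{\frac{13}{14}+\veps}(\log x)^{3}\ll \frac{x}{(\log x)^{A-2}},$$
so your route ends at $x/(\log x)^{A-3}$, one log short of the theorem. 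Your assertion that ``$\Sigma$ carries only a single logarithm'' has no mechanism behind it: the Barban--Vehov weights save exactly half a logarithm (bringing $(\log x)^{5/2}$ down to $(\log x)^{2}$ in the mean value), not a full one, and the remaining two logs (dyadic decomposition together with $\log(MN)$ in the maximal large sieve, Chebyshev's $\sum\Lambda^{2}$, the $\log x$ from the Type I partial summation) are intrinsic to the method.

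The correct bookkeeping --- and the paper's --- is not to discard the conductor dependence. Keep the factor $1+\log(Q/q_1)$ attached to each primitive conductor $q_1$ (via $\sum_{m\mid q,\,m\le Q}1/\varphi(m)\ll \varphi(q)^{-1}(1+\log(Q/q))$), and perform partial summation in $q_1$, i.e.\ bound the sum by $\frac{J(Q)}{Q}+\int_{Q_1}^{Q}\frac{J(\lambda)}{\lambda^{2}}\bigl(2+\log\frac{Q}{\lambda}\bigr)\,d\lambda$ with $J(\lambda)$ the $q/\varphi(q)$-weighted mean value. The point is that $\log(Q/\lambda)=O(1)$ in the range $\lambda\asymp Q$, where the dominant $Q x^{1/2}(\log x)^{2}$ contribution lives, so that term acquires no extra logarithm; the extra $\log$ only multiplies the $x/\lambda^{2}$ part, producing $\frac{x}{Q_1}(\log x)^{3}$, which is harmless once the small-conductor cutoff $Q_1$ is taken to be a suitable power of $\log x$ (handled there by Siegel--Walfisz, exactly as you propose). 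With that repair your argument lands on $x^{1/2}Q(\log x)^{2}+\frac{x}{Q_1}(\log x)^{3}+x^{\frac{13}{14}+\veps}(\log x)^{4}\ll_A x/(\log x)^{A-2}$ for $Q\le x^{1/2}/(\log x)^{A}$, which is the stated bound.
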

The implied constant in Theorem \ref{bvcor} is ineffective. We give an effective version of the result above together with its applications in Section \ref{effectivity}.

Previously, the best result of the type of Theorem \ref{bvcor} in the literature followed from \cite{DIT}; it had $A - 5/2$ instead of $A - 2$. While \cite{DIT} does not state the result in full -- focusing on estimating a crucial sum -- a complete form can be found in \cite{bvecsimple} (together with a fully explicit version). It is
$$
\sum_{\bfrac{q\leq Q }{ l(q)>Q_1}} \max_{2 \leq y \leq x} \max_{\bfrac{a}{(a,q)=1}} \left| \psi(y;q,a) - \frac{\psi(y)}{\varphi(q)} \right| \leq C \left(\frac{x}{Q_1}+x^{\frac{1}{2}}Q+x^{\frac{2}{3}}Q^{\frac{1}{2}}+x^{\frac{5}{6}}\log \frac{Q}{Q_1}\right)(\log x)^{\frac{7}{2}},$$
where $C$ is an explicit absolute constant (a similar fully explicit result was proven in \cite{Akbary2015} with $(\log x)^{\frac{9}{2}}$ instead of $(\log x)^{\frac{7}{2}}$).
Another effective variant without explicit
constants is given by Lenstra and Pomerance \cite[Lemma 11.2]{Len2016} (with bigger power of $\log$) in their work on Gaussian periods.

\begin{remark*}
Define
$$
\pi(x) = \sum_{p \leq x} 1 \qquad \text{ and } \qquad \pi(x;q,a) = \sum_{\bfrac{p \leq x}{ p \equiv a (\mmod{q}) }} 1.
$$
For $x \geq 4$, $1 \leq Q_1 \leq Q \leq x^{\frac{1}{2}}$ and any $\veps >0$ we have
$$
\sum_{\bfrac{q\leq Q }{ l(q)>Q_1}} \max_{2 \leq y \leq x} \max_{\bfrac{a}{(a,q)=1}} \left| \pi(y;q,a) - \frac{\pi(y)}{\varphi(q)} \right| 
\ll x^{\frac{1}{2}}Q(\log x)^2+\frac{x}{Q_1}(\log x)^3 + x^{\frac{13}{14}+\veps} (\log x)^4.$$
\end{remark*}
The proof of the remark is exactly the same as in \cite{Akbary2015}, we just have to change the power of $\log x$.

The key tool for the proof of Theorem \ref{bombierivinogradovimproved} is Vaughan's identity, which we have to get in an explicit version for our goal.
Define 
$$\psi(y,\chi)= \sum_{n\leq y} \Lambda(n) \chi(n),$$
the twisted summatory function for the von Mangoldt function $\Lambda$ and a Dirichlet character $\chi$ modulo $q$.
The key tool in getting Theorem \ref{bvcor} is the following estimate.
\begin{proposition}[Vaughan's inequality, improved] \label{vaug2}
For $x \geq 4$ and any $\veps > 0$ we have
\begin{equation*}
\sum_{q \leq Q} \frac{q}{\varphi(q)} \sideset{}{^*}\sum_{\chi (q)} \max_{y \leq x} |\psi(y,\chi)| \ll \left(x+Q^2x^{\frac{1}{2}}+Q x^{\frac{13}{14}+\veps}\right)(\log x)^{2},
\end{equation*}
where $Q$ is any positive real number and $\sideset{}{^*}\sum_{\chi (q)}$ means a sum over all primitive characters $\chi (\mmod q)$.
\end{proposition}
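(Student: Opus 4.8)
\emph{Proof strategy.} The plan is to run Vaughan's identity, but with the sharp truncation of the M\"obius function replaced by the Barban--Vehov weights, so that the resulting Type II bilinear form inherits the $L^2$-economy of Selberg's sieve. Fix $U=z_1=x^{1/7}$ and $z_2=x^{1/3}$ (any admissible choice with $z_2/z_1$ a fixed power of $x$ and $Uz_2\le x^{1/2}$ works), and let $\theta_d$ be the Barban--Vehov coefficients: $\theta_d=\mu(d)$ for $d\le z_1$, $\theta_d=\mu(d)\log(z_2/d)/\log(z_2/z_1)$ for $z_1<d\le z_2$, and $\theta_d=0$ for $d>z_2$. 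Writing $\Theta=(\theta_d)$ and using $\mathbf 1*\mu=\delta$, one obtains the weighted identity
\begin{equation*}
\Lambda=\Lambda_{\le U}+\log*\Theta-\Lambda_{\le U}*\mathbf 1*\Theta+\Lambda_{>U}*\mathbf 1*(\mu-\Theta),
\end{equation*}
which is exactly Vaughan's identity with $\mu_{\le V}$ replaced by $\Theta$. Two features drive the argument: $\mu(d)-\theta_d$ vanishes for $d\le z_1$, so in the last term both convolution variables exceed $z_1$; and, by the Barban--Vehov--Graham estimate attached to Selberg's sieve, $\sum_{n\le t}\bigl(\sum_{d\mid n}\theta_d\bigr)^2\ll t/\log(z_2/z_1)\ll t/\log x$, so the coefficient sequence $\beta_k=\sum_{d\mid k}(\mu(d)-\theta_d)=\mathbf 1_{k=1}-\sum_{d\mid k}\theta_d$ attached to the last term has mean square smaller by a full power of $\log x$ than the M\"obius-truncation coefficients in the classical identity.

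After multiplying by $\chi(n)$ and summing over $n\le y$, the first term is $O(U)$, while the second and third are Type I sums $\sum_{d\le D}c_d\sum_{m\le y/d}\chi(m)$ with $D\le z_2$, resp.\ $D\le Uz_2\le x^{1/2}$, and $|c_d|\ll\log x$ (for the second term $\sum_{m\le y/d}\chi(m)$ is replaced by a weighted analogue, handled the same way). Here $\max_{y}$ is free because the incomplete character sum $\sum_{m\le z}\chi(m)$ is $\ll q^{1/2}\log q$ uniformly in $z$ (P\'olya--Vinogradov); a Cauchy--Schwarz step followed by the large sieve over $q\le Q$ and primitive $\chi$ then bounds the Type I contribution by $\ll(x+Q^2x^{1/2})(\log x)^2$, with the logarithmic budget tracked carefully.

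The main work is the Type II term $\Lambda_{>U}*\mathbf 1*(\mu-\Theta)$. Group it as $\sum_{m>z_1}\Lambda(m)\sum_{k>z_1}\beta_k\,\chi(mk)$ with $mk\le y$, decompose $m,k$ into dyadic ranges $m\sim M$, $k\sim K$ with $z_1<M,K<x/z_1$ and $MK\le x$, apply Cauchy--Schwarz in one variable, and invoke the bilinear large sieve inequality
\begin{equation*}
\sum_{q\le Q}\frac{q}{\varphi(q)}\sideset{}{^*}\sum_{\chi (q)}\Bigl|\sum_{m\sim M}\sum_{k\sim K}a_m b_k\chi(mk)\Bigr|\ll (Q^2+M)^{1/2}(Q^2+K)^{1/2}\Bigl(\sum_m|a_m|^2\Bigr)^{1/2}\Bigl(\sum_k|b_k|^2\Bigr)^{1/2}.
\end{equation*}
Since $\sum_{m\sim M}\Lambda(m)^2\ll M\log x$ while $\sum_{k\sim K}|\beta_k|^2\ll K/\log x$ by the Barban--Vehov--Graham bound, the two logarithms cancel, leaving $(Q^2+M)^{1/2}(Q^2+K)^{1/2}(MK)^{1/2}$. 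Summing over the $\ll(\log x)^2$ admissible dyadic pairs, the terms in which $M$ (or $K$) dominates $Q^2$ give $\ll(x+Q^2x^{1/2})\log x$, and the cross term is $\ll Q\,M\,K^{1/2}\ll Qx\,z_1^{-1/2}=Qx^{13/14}$ because $K>z_1$ and $MK\le x$ — this is where the exponent $13/14$ and the choice $z_1=x^{1/7}$ enter. The dependence on $y$ through the constraint $mk\le y$ is removed by a truncated Perron integral at height $T=x$ (or an equivalent smooth splitting), costing one further factor $\log x$; a small $\veps$ absorbs the resulting error terms and the borderline ranges $z_1<K<z_2$ where Graham's estimate is less clean. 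Collecting the pieces yields the asserted bound.

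The delicate point — and the reason the exponent $5/2$ of \cite{DIT} drops to $2$ — is the logarithmic bookkeeping rather than any single inequality: the dyadic decomposition, the Cauchy--Schwarz step, the Type I incomplete-sum estimate, and the passage from a smooth to a sharp cutoff at $y$ each threaten to cost a power of $\log x$, while the Barban--Vehov/Graham mean-square estimate refunds exactly one. The obstacle is thus to organize the whole proof — smooth truncations wherever the cutoff is internal, the minimal number of dyadic splittings, the choice $U=z_1$, and a careful P\'olya--Vinogradov/large-sieve treatment of the Type I terms — so that this single refund is not squandered and the final exponent is precisely $2$ and not $5/2$ or $3$.
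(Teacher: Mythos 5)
Your overall strategy is the same as the paper's: your $\Theta$ is exactly the paper's $\mu(d)\eta(d)$ with the Barban--Vehov/Graham weight, the Graham mean-square estimate refunds the half-logarithm in the Type II factor, the bilinear large sieve handles Type II, and your $z_1=x^{1/7}$ reproduces the exponent $\frac{13}{14}$. The Type II part and its logarithmic bookkeeping are essentially sound. The genuine gap is in the Type I treatment. A ``Cauchy--Schwarz step followed by the large sieve'' cannot yield $(x+Q^2x^{1/2})(\log x)^2$ there: the inner variable is long and smooth (coefficients $1$ or $\log$), the large sieve only sees its $\ell^2$ norm, and the dyadic block $d\asymp D_0$ then contributes a term of size roughly $QxD_0^{-1/2}$, which for small $D_0$ is about $Qx$ --- far above the target. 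The only usable cancellation is in the incomplete character sum $\sum_{m\le z}\chi(m)$, i.e.\ P\'olya--Vinogradov, which is what the paper uses; but with P\'olya--Vinogradov the Type I contribution is $\ll Q^{5/2}D(\log x)^2$, where $D$ is the length of the outer variable. With your fixed, $Q$-independent choices this is $D\le z_2=x^{1/3}$ for $\log*\Theta$ and $D\le Uz_2=x^{10/21}$ for $\Lambda_{\le U}*\mathbf 1*\Theta$, so the claimed bound already fails once $Q>x^{1/3}$ (respectively $Q>x^{1/42}$), i.e.\ precisely in the range $Q$ near $x^{1/2}$ that the Bombieri--Vinogradov application needs.

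Nor can this be fixed by merely shrinking parameters while keeping them independent of $Q$: the Type II cross term forces $z_1\ge x^{1/7}$ (otherwise you lose $Qx^{13/14}$), hence $Uz_2>x^{2/7}$, while P\'olya--Vinogradov at $Q=x^{1/2}$ tolerates a Type I length of at most about $x^{1/4}$. The paper resolves exactly this tension with two devices absent from your proposal: (i) it splits the middle term $\lambda_2$ at an extra parameter $w$, so that only the range $c\le w$ is treated as Type I (length $wV_0$), the range $w<c\le U$ being fed into the large sieve as a second Type II sum; and (ii) in the regime $x^{3/7+\veps}\le Q\le x^{1/2}$ it takes $V$, $V_0$, $w$ proportional to powers of $x$ divided by $Q$, so that $Q^{5/2}V$ and $Q^{5/2}wV_0$ stay below $Q^2x^{1/2}$, switching to fixed powers of $x$ (your choice, $V=x^{1/7}$) only when $Q\le x^{3/7+\veps}$. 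Without the $w$-split and the $Q$-dependent parameter choice, your Type I estimate --- and hence the proposition for large $Q$ --- is not established, and your remark that ``any admissible choice with $Uz_2\le x^{1/2}$ works'' is false for this reason.
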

The improvement here consists in having a factor of $(\log x)^2$, rather than $(\log x)^{\frac{5}{2}}$ or $(\log x)^3$. 
In order to prove Proposition \ref{vaug2} we use the weighted version of Vaughan's identity (see Lemma \ref{vaughanweighted}) and an estimate due to Barban-Vehov \cite{Barban2009} and Graham \cite{Michigan1978}. While Graham uses the Siegel-Walfisz theorem, there is an effective (and explicit) version of it in \cite{Andres2014}. We follow methods developed in \cite{Andres2014} in the proof.

Proposition \ref{vaug2} allows us to prove the Bombieri-Vinogradov theorem in the form of Theorem \ref{bvcor} and, hence, Corollary \ref{bombierivinogradovimproved}. In addition to Theorem \ref{bombierivinogradovimproved}, the proof uses the Siegel-Walfisz theorem, which states that
$$\psi(x,\chi) - \delta(\chi)x \ll_Ax e^{-c\sqrt{\log x}}$$
uniformly for $q \leq (\log x)^A$. Here $A > 0$ is a fixed real number, $c$ is an absolute positive constant, and $\delta(\chi) =1$ if $\chi$ is principal and is zero otherwise. The implied constant
in the Bombieri-Vinogradov theorem is ineffective since the implied constant in the Siegel-Walfisz theorem is ineffective. To prove Corollary \ref{bvcor} we use the Siegel-Walfisz
theorem to deal with moduli $q \leq Q$ having small prime divisors and Theorem \ref{bombierivinogradovimproved} to deal with the sum over the remaining moduli. 

\subsection{Effectivity} \label{effectivity}
We formulate the corollary of the main result.
\begin{corollary} [Bombieri-Vinogradov, with exceptional character taken out]
\label{bombierivinogradovimproved}
Let $x \geq 4$, $1 \leq Q_1 \leq Q \leq x^{\frac{1}{2}}$. Denote by $l(q)$ the smallest prime divisor of $q$.
Then for any positive $\veps > 0$ we have
$$
\sum_{\bfrac{q\leq Q }{ l(q)>Q_1}} \max_{2 \leq y \leq x} \max_{\bfrac{a}{(a,q)=1}} \left| \psi(y;q,a) - \frac{\psi(y)}{\varphi(q)} \right| 
\ll x^{\frac{1}{2}}Q(\log x)^2+\frac{x}{Q_1}(\log x)^3 + x^{\frac{13}{14}+\veps} (\log x)^4.$$
\end{corollary}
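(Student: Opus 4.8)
The plan is to deduce Corollary~\ref{bombierivinogradovimproved} from Proposition~\ref{vaug2} by the standard passage from character sums to the count in residue classes, but with careful bookkeeping to keep only two powers of $\log x$ in the main term. First I would reduce $\psi(y;q,a) - \psi(y)/\varphi(q)$ to a sum over non-principal characters: by orthogonality,
\[
\psi(y;q,a) - \frac{\psi(y)}{\varphi(q)} = \frac{1}{\varphi(q)} \sum_{\chi \neq \chi_0} \bar\chi(a)\,\psi(y,\chi) + O\!\left(\frac{(\log x)(\log q)}{\varphi(q)} + \frac{\log x \log y}{?}\right),
\]
where the error accounts for the contribution of prime powers $p^k \mid q$ to $\psi(y,\chi_0)$ versus $\psi(y)$; this error, summed over $q \le Q$, contributes only $O(x^{1/2}(\log x)^2)$ or less and is harmless. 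Then each non-principal $\chi \bmod q$ is induced by a unique primitive character $\chi^*$ modulo some $d \mid q$, $d > 1$, and $\psi(y,\chi) = \psi(y,\chi^*) + O((\log y)(\log q)) = \psi(y,\chi^*) + O((\log x)^2)$ since the difference only involves primes dividing $q$. Taking maxima over $y$ and over $a$ with $(a,q)=1$ (the latter trivially bounded by $|\bar\chi(a)| = 1$), I get
\[
\sum_{\bfrac{q \le Q}{l(q) > Q_1}} \max_{y} \max_{a} \left| \psi(y;q,a) - \frac{\psi(y)}{\varphi(q)} \right|
\ll \sum_{\bfrac{q \le Q}{l(q) > Q_1}} \frac{1}{\varphi(q)} \sum_{\bfrac{d \mid q}{d > 1}} \sideset{}{^*}\sum_{\chi(d)} \max_{y} |\psi(y,\chi^*)| + (\text{error}).
\]

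Next I would interchange the order of summation, writing $q = dm$ and summing over $m$: the inner sum $\sum_{m} \frac{1}{\varphi(dm)}$ over $m \le Q/d$ with the constraint $l(dm) > Q_1$ is $\ll \frac{1}{\varphi(d)} \sum_{m \le Q/d} \frac{1}{\varphi(m)}$, and here the divisibility condition $l(q) > Q_1$ is crucial — it forces every prime factor of $q$ (hence of $m$ and of $d$) to exceed $Q_1$, so the number of admissible $m \le Q/d$ is $O(1 + \frac{Q}{d Q_1} \cdot (\text{something}))$; more precisely $\sum_{m \le Q/d,\, l(m) > Q_1} \frac{1}{\varphi(m)} \ll 1 + \frac{\log(Q/d)}{\log Q_1}$, which after multiplying by the trivial bound for the character sum and the range of $d$ will produce the $\frac{x}{Q_1}(\log x)^3$ term. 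For the $d$ in a dyadic block $D < d \le 2D$, I would apply Proposition~\ref{vaug2} with $Q$ replaced by $2D$, giving
\[
\sum_{D < d \le 2D} \frac{d}{\varphi(d)} \sideset{}{^*}\sum_{\chi(d)} \max_{y} |\psi(y,\chi^*)| \ll \left(x + D^2 x^{1/2} + D x^{13/14+\veps}\right)(\log x)^2,
\]
then multiply by the factor $\frac{1}{D}$ coming from $\frac{1}{\varphi(d)} \approx \frac{1}{d} \le \frac{1}{D}$ together with the weight $\frac{1}{\varphi(d)}$ that is already accounted, and by the count $O(1 + \frac{\log(Q/D)}{\log Q_1})$ of compatible $m$'s. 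Summing the resulting $\left(\frac{x}{D} + D x^{1/2} + x^{13/14+\veps}\right)(\log x)^2 \cdot (1 + \frac{\log x}{\log Q_1})$ over dyadic $D \le Q$: the $D x^{1/2}$ term is largest at $D \asymp Q$ giving $x^{1/2} Q (\log x)^2$; the $x/D$ term is largest at small $D$ but the factor $1 + \log x / \log Q_1$ and the need to sum over $O(\log x)$ dyadic scales near $D \asymp Q_1$ where the constraint bites produce $\frac{x}{Q_1}(\log x)^3$; and the $x^{13/14+\veps}$ term summed over $O(\log x)$ scales with the extra $\log x/\log Q_1 \le \log x$ factor gives $x^{13/14+\veps}(\log x)^4$ (absorbing the harmless $\log$-powers into the $\veps$). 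Collecting the three contributions yields exactly the claimed bound.

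The main obstacle I expect is the careful treatment of the constraint $l(q) > Q_1$ when interchanging summations and splitting into dyadic blocks: one must show that restricting to $q$ all of whose prime factors exceed $Q_1$ genuinely saves a factor $\asymp Q_1$ in the "$x$" term of Proposition~\ref{vaug2} (this is what turns the trivial $x(\log x)^2 \cdot \frac{1}{\varphi(d)}$-type contribution, which would only give $x(\log x)^3$ overall, into $\frac{x}{Q_1}(\log x)^3$) — the point being that for a primitive $\chi$ mod $d$ with $d$ composite-free below $Q_1$ to even exist one needs $d > Q_1$, so the $d$-sum effectively starts at $Q_1$, and then $\sum_{d > Q_1} \frac{1}{d} (\text{count of } m) \ll \frac{1}{Q_1}(\cdots)$. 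A secondary technical point is verifying that the prime-power discrepancies between $\psi(y,\chi)$ and $\psi(y,\chi^*)$, and between $\psi(y,\chi_0)$ and $\psi(y)$, summed against $\frac{1}{\varphi(q)}$ over the relevant range, never exceed the stated error terms; this is routine but must be done. I would also remark that the final absorption of stray logarithmic factors into $x^{13/14+\veps}$ is legitimate precisely because $\veps > 0$ is arbitrary.
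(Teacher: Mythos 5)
Your proposal is correct and follows essentially the same route as the paper: orthogonality, passage to the inducing primitive characters, the key observation that $l(q)>Q_1$ forces every conductor $d>1$ to exceed $Q_1$ (so only the principal character and conductors $>Q_1$ survive), the Vaughan-type bound $\sum_{m\le Q/d}\varphi(dm)^{-1}\ll \varphi(d)^{-1}\bigl(1+\log \frac{Q}{d}\bigr)$, and then Proposition \ref{vaug2} applied with a weight decaying in the conductor. The only differences are cosmetic --- you sum dyadically in the conductor where the paper uses partial summation via $J(\lambda)$ and explicit integrals, and you compare with $\psi(y)$ directly instead of with $y$ plus an effective prime number theorem step --- just be sure to keep the $D$-dependent factor $1+\log\frac{Q}{D}$ (not a uniform $1+\frac{\log x}{\log Q_1}$, which moreover degenerates when $Q_1=1$) when summing the $Dx^{1/2}$ contribution, exactly as your own remark about the top scale $D\asymp Q$ indicates.
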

The implied constant is effective and can be made explicit using \cite{Andres2014} together with the best available constant in P\'{o}lya-Vinogradov inequality given in \cite{Frolenkov2013}. The effectivity is attained by getting rid of those moduli that have small prime divisors, thus of a possible exceptional character.

The recent work of Liu \cite{Liu2017} gives us a genunely effective Bombiei-Vinogradov theorem. This is ultimately due to the fact that we can use an effective Landau-Page result (see \cite{Page}, \cite{Landau1918} and also \cite[Chapter 10]{Vinogradov}), which is non-trivial up to $(\log x)^2$ instead of making a standard ineffective step on applying Siegel-Walfisz theorem.

\begin{theorem*}[Liu, 2017]
There exists an effective positive constant $B$ such that
\[
\sum_{q \leq x^{1/2}/(\log x)^B} \max_{y \leq x} \max_{(a,q)=1} \left| \psi(y;q,a)-\frac{\Li y}{\varphi(q)}\right| \ll x \frac{(\log \log x)^9}{\log x}. 
\]
\end{theorem*}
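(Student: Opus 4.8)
The plan is to run the classical effective Bombieri--Vinogradov argument, feeding in Proposition~\ref{vaug2} as the large-sieve ingredient and replacing every appeal to Siegel's theorem by the \emph{effective} Landau--Page theorem (see \cite{Page}); the extra factor $(\log\log x)^{9}$ is precisely the price of that substitution.

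First I would reduce to twisted sums. By orthogonality of Dirichlet characters, $\psi(y;q,a)=\varphi(q)^{-1}\sum_{\chi\bmod q}\bar\chi(a)\psi(y,\chi)$ for $(a,q)=1$; pulling off the principal character and absorbing the difference between $\Li y$ and $\psi(y,\chi_0)$ via the effective prime number theorem for the trivial modulus (which has no exceptional zero), it suffices to bound $\sum_{q\le Q}\varphi(q)^{-1}\sum_{\chi\ne\chi_0}\max_{y\le x}|\psi(y,\chi)|$ up to a negligible error. Passing from $\chi$ to the primitive character $\chi^{*}$ inducing it costs $O((\log qx)^{2})$ per character, hence $\ll Q(\log x)^{3}$ in total, and rearranging by conductor gives the clean shape
\[
\sum_{q\le Q}\ \max_{2\le y\le x}\ \max_{(a,q)=1}\Bigl|\psi(y;q,a)-\tfrac{\Li y}{\varphi(q)}\Bigr|\ \ll\ \sum_{1<q^{*}\le Q}\frac{\log(2Q/q^{*})}{\varphi(q^{*})}\sideset{}{^*}\sum_{\chi\bmod q^{*}}\max_{y\le x}|\psi(y,\chi)|\ +\ \text{(negligible)}.
\]

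Next I would split the conductor range at a threshold $D=(\log x)^{4}$. For $q^{*}>D$ the dyadic block $R<q^{*}\le 2R$ is estimated by $\varphi(q^{*})^{-1}\le R^{-1}q^{*}/\varphi(q^{*})$ together with Proposition~\ref{vaug2} (parameter $2R$); summing over dyadic $R\in(D,Q]$ yields a total
\[
\ll\ \Bigl(\frac{x\log x}{D}+Qx^{1/2}+x^{13/14+\veps}(\log x)^{2}\Bigr)(\log x)^{2}\ \ll\ \frac{x}{\log x},
\]
once $Q\le x^{1/2}/(\log x)^{B}$ with $B$ large: the $Q^{2}x^{1/2}$-term of Proposition~\ref{vaug2} is controlled by $B$, the $x^{13/14+\veps}$-term is harmless, and the otherwise dangerous ``$x$''-term is divided down by $D$. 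For $q^{*}\le D$ I invoke the Landau--Page theorem: there is at most one exceptional real primitive character $\chi_1\bmod q_1$, and since Proposition~\ref{vaug2} is unconditional and already covers conductors $>D$, I may assume it does not exist or that $q_1\le D$. For every primitive $\chi$ of conductor $\le D$ other than $\chi_1$, the classical \emph{effective} zero-free region for $L(s,\chi)$ gives $\psi(y,\chi)\ll y\exp(-c\sqrt{\log y})$ uniformly in $y\le x$ via the explicit formula, so those characters contribute $\ll x(\log x)^{O(1)}\exp(-c\sqrt{\log x})$. This is the step where Landau--Page replaces the ineffective Siegel--Walfisz input and keeps $B$ effective.

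The remaining term --- and the genuine obstacle --- is the exceptional character $\chi_1\bmod q_1$, if it exists. It perturbs $\psi(y;q,a)$ only when $q_1\mid q$, adding there $\chi_1^{*}(a)\,y^{\beta_1}/(\beta_1\varphi(q))$, so its total contribution is $\ll x^{\beta_1}\sum_{q\le Q,\,q_1\mid q}\varphi(q)^{-1}\ll x^{\beta_1}\varphi(q_1)^{-1}\log Q$. I would split on the size of $q_1$. If $\varphi(q_1)\ge(\log x)^{2}/(\log\log x)^{9}$, then the trivial bound $x^{\beta_1}\le x$ already gives $\ll x(\log\log x)^{9}(\log Q)/(\log x)^{2}\ll x(\log\log x)^{9}/\log x$; this regime is what pins the exponent in the final bound. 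If instead $\varphi(q_1)<(\log x)^{2}/(\log\log x)^{9}$, then $q_1$ is small enough (indeed $q_1\ll(\log x)^{2}/(\log\log x)^{8}$) that Page's effective lower bound $1-\beta_1\gg(\sqrt{q_1}\,\log^{2}q_1)^{-1}$ gives $1-\beta_1\gg(\log\log x)^{2}/\log x$, whence $x^{\beta_1}=x\cdot x^{-(1-\beta_1)}\ll x(\log x)^{-A}$ for every fixed $A$ and the contribution is negligible. The delicate point throughout is exactly this balancing: effective zero-free regions dispose of every character of small conductor but one, Proposition~\ref{vaug2} disposes of all large conductors, and the single residual exceptional zero --- pushed only as far as Page's theorem permits --- is precisely what leaves the bound $x(\log\log x)^{9}/\log x$ in place of a clean power-of-log saving. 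Collecting the three contributions and fixing $D$ and $B$ as above (all effective, since only the classical prime number theorem and Landau--Page are used) completes the proof.
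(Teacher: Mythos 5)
This statement is not proved in the paper at all: it is quoted from Liu \cite{Liu2017} as background for the discussion of effectivity, so there is no proof of the paper's own to compare yours against. Judged on its own terms, your sketch is a credible reconstruction and follows exactly the strategy the paper attributes to Liu: orthogonality and reduction to primitive characters, Proposition \ref{vaug2} (or any Vaughan/large-sieve bound) for conductors above a threshold $D=(\log x)^4$, the \emph{effective} Landau--Page zero-free region for the finitely many non-exceptional characters of conductor at most $D$, and a separate treatment of the single possible exceptional character $\chi_1$ of conductor $q_1$ via the effective bound $1-\beta_1\gg q_1^{-1/2}(\log q_1)^{-2}$. The bookkeeping checks out: the large-conductor range gives $\ll x/\log x$ once $B$ is large enough, the non-exceptional small conductors give $\ll x\exp(-c\sqrt{\log x})$ times powers of $\log x$, and your dichotomy on $\varphi(q_1)$ at the threshold $(\log x)^2(\log\log x)^{-9}$ does deliver $x(\log\log x)^9/\log x$ (indeed with room to spare, so the exponent $9$ in Liu's statement presumably reflects finer bookkeeping or the intended applications rather than this threshold trick). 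Note that this exceptional-zero analysis is genuinely absent from the paper, whose own effective result (Corollary \ref{bombierivinogradovimproved}) dodges the exceptional character by restricting to moduli with $l(q)>Q_1$.

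Two cautions. First, the statement as transcribed pairs $\psi(y;q,a)$ with $\Li y/\varphi(q)$; since $y-\Li y$ is of order $y$, the difference of main terms alone would contribute $\gg x\log x$ over $q\leq Q$, so the theorem must be read with main term $y/\varphi(q)$ (equivalently as a statement about $\theta$ or about $\pi$ with $\Li y$), and your claim that the discrepancy between $\Li y$ and $\psi(y,\chi_0)$ is ``absorbed via the effective prime number theorem'' is not literally correct --- restate that reduction for the correct main term. Second, you silently use two standard but nontrivial effective inputs that deserve explicit mention: that for non-exceptional primitive $\chi$ of conductor $q\leq D$ one has $\psi(y,\chi)\ll y\exp(-c\sqrt{\log y})$ (valid because $\log D\ll\sqrt{\log y}$ in the relevant range, with small $y$ handled trivially), and that the exceptional character satisfies $\psi(y,\chi_1)=-y^{\beta_1}/\beta_1+O(y\exp(-c\sqrt{\log y}))$ with effective constants. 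With those points made precise, your argument stands as a correct proof sketch of the quoted theorem, independent of anything proved in this paper.
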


In \cite{Liu2017} various applications of the statement above are considered, such as an asymptotic formula for the representation of a large integer as the sum of two squares and a prime and Titchmarsh divisor problem (both results obviously become effective).

\subsection*{Acknowledgements}
The author is grateful to Henryk Iwaniec for his crucial advice. The author also thanks her former supervisor Harald Helfgott for his help.

\section{Proof of Theorem \ref{bvcor}}
\subsection*{Auxiliary lemmas}
We start with a so-called weighted Vaughan identity. It allows us to get cancellation in type II sums.
\begin{lemma}[Weighted Vaughan identity] \label{vaughanweighted}
Let $U,V \geq 1$, $n>U$. Define a function $\eta(t): \mathbb{Z}^{+} \to \mathbb{R}$ with $\eta(t)=1$ for $t \leq V$.
We have
$$\Lambda(n)=\lambda_0(n)+\lambda_1(n)+\lambda_2(n)+\lambda_3(n),$$
where $\lambda_0(n)=\Lambda(n)$ for $n \leq U$ and equals to $0$ for $n \leq U$, and
\begin{equation*}
\begin{split}
&\lambda_1(n)=\sum_{d|n} \mu(d) \eta(d) \log \frac{n}{d},
\;\;\;\;\;\;\;\;\;\;\;\;\;\; 
\lambda_2(n)=-\sum_{c \leq U} \sum_{dc|n} \mu(d) \Lambda(c) \eta(d),\\
&\lambda_3(n)=\sum_{c > U} \sum_{dc|n} \mu(d) \Lambda(c) (1-\eta(d)).\\
\end{split}
\end{equation*}
\end{lemma}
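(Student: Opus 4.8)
The plan is to recognize each $\lambda_i$ as a Dirichlet convolution of familiar arithmetic functions and then collect terms, exactly as in the classical derivation of Vaughan's identity but carrying the weight $\eta$ through symbolically. Write $\mathbf{1}$ for the constant function $1$, $\delta$ for the Dirichlet identity (so $\delta(1)=1$ and $\delta(n)=0$ for $n>1$), and $L(n)=\log n$. Split the von Mangoldt function as $\Lambda=\Lambda'+\Lambda''$, where $\Lambda'(c)=\Lambda(c)$ for $c\le U$ and $\Lambda'(c)=0$ otherwise, and $\Lambda''=\Lambda-\Lambda'$; and set $g(d)=\mu(d)\eta(d)$. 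Throughout I use the standard identities $L=\mathbf{1}*\Lambda$ and $\mu*\mathbf{1}=\delta$.

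First I would rewrite the three pieces as convolutions. Grouping $n=de$ in $\lambda_1$ gives $\lambda_1=g*L=g*\mathbf{1}*\Lambda=g*\mathbf{1}*\Lambda'+g*\mathbf{1}*\Lambda''$. Writing $n=dce$ in the double sum defining $\lambda_2$ — the extra factor $\mathbf{1}$ accounting for the cofactor $e=n/(dc)$ — gives $\lambda_2=-\,g*\Lambda'*\mathbf{1}$, which cancels the $\Lambda'$-part of $\lambda_1$, so that $\lambda_1+\lambda_2=g*\mathbf{1}*\Lambda''$. For $\lambda_3$ the factor $1-\eta(d)$ converts the M\"obius weight into $\mu(d)(1-\eta(d))=\mu(d)-g(d)$, and the same reindexing yields $\lambda_3=(\mu-g)*\Lambda''*\mathbf{1}=\mu*\mathbf{1}*\Lambda''-g*\mathbf{1}*\Lambda''$.

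Adding the three contributions, the terms $g*\mathbf{1}*\Lambda''$ cancel and
\[
\lambda_1+\lambda_2+\lambda_3 \;=\; \mu*\mathbf{1}*\Lambda'' \;=\; \delta*\Lambda'' \;=\; \Lambda''.
\]
Since $\Lambda''(n)=\Lambda(n)$ for $n>U$ and $\Lambda''(n)=0$ for $n\le U$, while $\lambda_0(n)=0$ for $n>U$ by definition, we obtain $\Lambda(n)=\lambda_0(n)+\lambda_1(n)+\lambda_2(n)+\lambda_3(n)$ for every $n>U$ (indeed for all $n$, once $\lambda_0$ is included).

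I do not expect a genuine obstacle here: the only points that need care are the reindexing of ``$\sum_{c}\sum_{dc\mid n}$'' as a triple Dirichlet convolution against $\mathbf{1}$, and the bookkeeping observation that the weighted term $g*\mathbf{1}*\Lambda'$ (respectively $g*\mathbf{1}*\Lambda''$) is arranged to occur twice with opposite signs, so that only the unweighted $\mu*\mathbf{1}*\Lambda''=\delta*\Lambda''$ survives. No property of $\eta$ is used in the identity beyond its being a real-valued function on $\mathbb{Z}^{+}$; the normalization $\eta(t)=1$ for $t\le V$ (together with $U,V\ge 1$) plays no role in the identity itself and enters only in the subsequent estimation of $\lambda_1,\lambda_2,\lambda_3$.
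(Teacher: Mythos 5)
Your proof is correct, but it takes a genuinely different route from the paper. The paper's proof quotes the classical Vaughan identity $\Lambda=\Lambda_1+\Lambda_2+\Lambda_3$ (with the sharp cutoffs $d\le V$, $d>V$) as a known fact and then verifies that the weighted terms $\lambda_i$ differ from the $\Lambda_i$ by corrections $\lambda_i'$ supported on $d>V$ which cancel identically, the cancellation resting on $\sum_{c\mid m}\Lambda(c)=\log m$; this is precisely where the normalization $\eta(t)=1$ for $t\le V$ is used, since it is what makes $\Lambda_1$ and $\Lambda_2$ agree with $\lambda_1,\lambda_2$ on the range $d\le V$ and makes $1-\eta$ vanish there. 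You instead prove the identity from scratch by pure convolution algebra: writing $g=\mu\eta$, $\Lambda=\Lambda'+\Lambda''$, and using only $L=\mathbf{1}*\Lambda$ and $\mu*\mathbf{1}=\delta$, you get $\lambda_1+\lambda_2+\lambda_3=g*\mathbf{1}*\Lambda-g*\mathbf{1}*\Lambda'+(\mu-g)*\mathbf{1}*\Lambda''=\mu*\mathbf{1}*\Lambda''=\Lambda''$, which is $\Lambda$ for $n>U$. Your reindexings of the double sums as triple convolutions are the only places needing care and they are done correctly. What each approach buys: the paper's argument keeps the new content minimal by leaning on the standard identity, at the cost of genuinely invoking the hypothesis $\eta=1$ on $[1,V]$ in the proof; yours is self-contained, makes the bookkeeping transparent, and shows as a byproduct that the identity holds for an arbitrary real-valued $\eta$ and for all $n$ (the conditions on $\eta$, like $1-\eta$ vanishing up to $V$ and $\eta$ vanishing beyond $V_0$, matter only in the subsequent estimation of the $S_i$, exactly as you say). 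Your observation also implicitly corrects the obvious typo in the statement ($\lambda_0(n)=0$ should hold for $n>U$, not $n\le U$).
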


\begin{proof}
Let $n > U$, since otherwise the statement is trivial.
Define the following quantities
\begin{equation*}
\begin{split}
&\Lambda_1(n) = \sum_{\bfrac{d|n}{d \leq V}} \mu(d) \log \frac{n}{d} = \lambda_1(n) - \sum_{\bfrac{d|n}{d>V}} \mu(d) \eta(d) \log \frac{n}{d} = \lambda_1(n)+\lambda_1'(n),\\
&\Lambda_2(n) = - \sum_{c \leq U} \sum_{\bfrac{dc|n}{d \leq V}} \mu(d) \Lambda(c) = \lambda_2(n) + \sum_{c \leq U}\sum_{\bfrac{dc|n}{d>V}} \mu(d) \Lambda(c) \eta(d) = \lambda_2(n)+\lambda_2'(n),\\
&\Lambda_3(n) = \sum_{c > U} \sum_{\bfrac{dc|n}{d > V}} \mu(d) \Lambda(c) = \lambda_3(n) + \sum_{c > U}\sum_{\bfrac{dc|n}{d>V}} \mu(d) \Lambda(c) \eta(d) = \lambda_3(n)+\lambda_3'(n).
\end{split}
\end{equation*}
Vaughan's identity in its classical form is 
$$\Lambda(n)=\Lambda_1(n)+\Lambda_2(n)+\Lambda_3(n),$$
so it remains to show that $\lambda_1'(n)+\lambda_2'(n)+\lambda_3'(n) = 0$ for every $n$. 
Let us rewrite this sum
\begin{equation*}
\begin{split}
\sum_{i=1}^{3}\lambda_i'(n) &= \sum_{\bfrac{d|n}{d>V}} \left(-\mu(d) \eta(d)\log \frac{n}{d}  + 
\sum_{\bfrac{c | \frac{n}{d}}{c \leq U}} \mu(d) \Lambda(c)\eta(d)+
\sum_{\bfrac{c | \frac{n}{d}}{c > U}} \mu(d) \Lambda(c) \eta(d) \right)\\
&=\sum_{\bfrac{d|n}{d>V}} \left( -\mu(d) \eta(d) \log \frac{n}{d}  + \mu(d) \eta(d) \sum_{c | \frac{n}{d}} \Lambda(c) \right) =0,\\
\end{split}
\end{equation*}
where in the last equality we used the fact that $\sum_{x|y} \Lambda(x) = \log y$.
\end{proof}

\begin{lemma}[Graham \cite{Michigan1978}] \label{graham1}
Let $1 \leq N_1 \leq N_2 \leq N$ and define
$$f_i(d) = \begin{cases}
\mu(d)\log \frac{N_i}{d}, &d \leq N_i,\\
0, &d>N_i.\\
\end{cases}
$$
We have
$$\sum_{n=1}^{N} \left(\sum_{d_1|n}f_1(d_1)\right)\left(\sum_{d_2|n}f_2(d_2)\right) = N \log N_1 + O(N).$$
\end{lemma}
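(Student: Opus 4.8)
The plan is to expand the double sum, switch the order of summation to bring the divisor sums outside, and reduce the problem to estimating $\sum_{d_1, d_2} f_1(d_1) f_2(d_2) \lfloor N / [d_1, d_2] \rfloor$, where $[d_1,d_2]$ denotes the least common multiple. First I would write
$$
S := \sum_{n=1}^N \Bigl(\sum_{d_1 \mid n} f_1(d_1)\Bigr)\Bigl(\sum_{d_2 \mid n} f_2(d_2)\Bigr) = \sum_{d_1 \le N_1} \sum_{d_2 \le N_2} f_1(d_1) f_2(d_2) \Bigl\lfloor \frac{N}{[d_1,d_2]} \Bigr\rfloor,
$$
using that $d_1 \mid n$ and $d_2 \mid n$ iff $[d_1,d_2] \mid n$. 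Replacing $\lfloor N/[d_1,d_2]\rfloor$ by $N/[d_1,d_2] + O(1)$ costs an error of size $O\bigl(\sum_{d_1 \le N_1}\sum_{d_2 \le N_2} |f_1(d_1)||f_2(d_2)|\bigr) = O\bigl((\sum_{d \le N} \log(N/d))^2 / \text{something}\bigr)$; more carefully, $\sum_{d \le N_i} |\mu(d)| \log(N_i/d) \ll N_i$, so the total error from the floor functions is $O(N_1 N_2)$, which is $O(N^2)$ — too big. So the naive bound on the error is insufficient, and this is the crux of the matter.

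The fix, and the main technical step, is to parametrize by $e = (d_1, d_2)$ and write $d_1 = e a$, $d_2 = e b$ with $(a,b) = 1$, so $[d_1,d_2] = eab$ and $\mu(d_1)\mu(d_2) = \mu(e)^2 \mu(a)\mu(b)$ (nonzero only when $e, a, b$ are pairwise coprime and squarefree). Then $S = N \sum_{e,a,b} \frac{\mu(e)^2 \mu(a)\mu(b)}{eab} \log\frac{N_1}{ea} \log\frac{N_2}{eb} + (\text{error})$, and the main term can be evaluated by recognizing it as a truncated form of the identity $\sum_{d \le y} \mu(d)/d \cdot \log(y/d) = 1 + O(\cdots)$ together with $\sum_{e} \mu(e)^2/(e \cdot \text{stuff})$; the coprimality conditions are handled by Möbius inversion in the usual way. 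The key arithmetic input is the classical estimate $\sum_{d \le y} \frac{\mu(d)}{d}\log\frac{y}{d} = 1 + O(1/\log y)$ (or its variants with a constant error), which after summing against the outer $e$-sum produces the main term $N \log N_1$; the asymmetry (getting $\log N_1$ rather than $\log N_2$ or their average) comes from the range $N_1 \le N_2$, so that the inner $d_2$-sum is ``complete enough'' to evaluate to a constant while the $d_1$-sum supplies the logarithm.

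For the error term I would split: the contribution where $[d_1,d_2] > N$ is handled trivially (the floor is $0$ and the real term $N/[d_1,d_2] < 1$, so there are at most $O(N \log^2 N)$... ) — actually the cleaner route is to keep the floor and bound $\sum |f_1 f_2| \cdot \min(N/[d_1,d_2], 1)$ directly, splitting according to whether $[d_1,d_2] \le N$ or not, and using $\sum_{m \le N} \frac{1}{m}\bigl(\sum_{[d_1,d_2]=m} |f_1(d_1)||f_2(d_2)|\bigr) \ll \log^2 N$ type bounds to get everything down to $O(N)$. The hard part will be organizing these error estimates so that the cross terms coming from truncating the $\mu(d)/d \log(y/d)$ sums, together with the floor-function errors, all collapse to $O(N)$ rather than $O(N \log N)$; this requires using the cancellation in the Möbius function (e.g. $\sum_{d \le y} \mu(d)/d \ll 1$) at the right moments rather than taking absolute values everywhere. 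Once the bookkeeping is set up, each piece is a routine application of partial summation and Mertens-type estimates.
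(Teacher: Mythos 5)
You should first note that the paper itself gives no proof of this lemma: it is quoted verbatim from Graham \cite{Michigan1978}, and the surrounding text even signals what the real input is (``While Graham uses the Siegel--Walfisz theorem\dots''). So the relevant comparison is with Graham's argument, which indeed starts from the reduction you write down and from the gcd parametrization $d_1=ea$, $d_2=eb$; the work is then carried by a uniform estimate of the shape $\sum_{d\le z,\,(d,k)=1}\frac{\mu(d)}{d}\log\frac{z}{d}=\frac{k}{\varphi(k)}+O(\text{error uniform and summable in }k)$, proved with prime-number-theorem/Siegel--Walfisz strength input, after which the main term appears as $N\sum_{e\le N_1}\mu^2(e)/\varphi(e)+O(N)=N\log N_1+O(N)$.

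Measured against that, your proposal has a genuine gap exactly where the lemma's content lies. You correctly reduce to $\sum_{d_1,d_2}f_1(d_1)f_2(d_2)\lfloor N/[d_1,d_2]\rfloor$ and correctly observe that the trivial floor-error bound $O(N_1N_2)$ is fatal, but the repair you sketch does not work as stated and is not replaced by anything concrete: bounding $\sum|f_1(d_1)||f_2(d_2)|\min\bigl(N/[d_1,d_2],1\bigr)$ by divisor-counting estimates of the type $\sum_{m\le N}\frac{1}{m}\sum_{[d_1,d_2]=m}|f_1||f_2|$ inevitably produces $N(\log N)^{c}$ with $c\ge 2$, not $O(N)$, and the phrase ``use the cancellation in the M\"obius function at the right moments'' is precisely the difficulty, not a step of the proof. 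Likewise, after the gcd parametrization the inner sums carry coprimality conditions, so the key input cannot be the unrestricted $\sum_{d\le y}\frac{\mu(d)}{d}\log\frac{y}{d}=1+O(1/\log y)$; you need its $(d,k)=1$ analogue with value $\frac{k}{\varphi(k)}$ and an error term uniform enough to be summed over the gcd variable $e$ up to $N_1$ (possibly up to $N$), which is where the PNT/Siegel--Walfisz strength enters and which ``partial summation and Mertens-type estimates'' will not deliver. Finally, your explanation of the asymmetry is off: each coprime M\"obius sum contributes only a bounded factor of size $k/\varphi(k)$, and the logarithm does not come from ``the $d_1$-sum''; it comes from the sum over the common factor, $\sum_{e\le N_1}\mu^2(e)/\varphi(e)=\log N_1+O(1)$, whose range is capped at $N_1$ because $ea\le N_1$ --- this is what forces $N\log N_1$ rather than $N\log N_2$. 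As it stands, the proposal is a correct opening reduction plus an accurate diagnosis of the obstacle, but not a proof.
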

From the lemma above one can deduce
\begin{corollary} \label{graham2}
Define a function $\eta(t)$, that is equal to $1$ for $t \leq V$, to $0$ for $t > V_0$ and
$$\eta(t) = \frac{\log \frac{V_0}{t}}{\log \frac{V_0}{V}}, \;\; V < t \leq V_0.$$ 
Then
$$\sum_{k \leq Y} \left|\sum_{d|k} \mu(d) \eta(d)\right|^2 \ll \frac{Y}{\log \frac{V_0}{V}}.$$
\end{corollary}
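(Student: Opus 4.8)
The plan is to realize the trapezoidal weight $\eta$ as a normalized difference of two of the logarithmic weights appearing in Lemma \ref{graham1}, and then to expand the square and apply that lemma term by term. Put $L=\log\frac{V_0}{V}$, and for a parameter $N\ge1$ let $f_N(d)=\mu(d)\log\frac{N}{d}$ for $d\le N$ and $f_N(d)=0$ for $d>N$ (so $f_N$ is the function of Lemma \ref{graham1} with $N_1=N_2=N$), and set $S_N(k)=\sum_{d\mid k}f_N(d)$. The first step is the pointwise identity
\[
\sum_{d\mid k}\mu(d)\,\eta(d)=\frac1L\bigl(S_{V_0}(k)-S_V(k)\bigr),
\]
verified by comparing the coefficient of $\mu(d)$ on the two sides in the three ranges $d\le V$, $V<d\le V_0$ and $d>V_0$: in the first range the right-hand side contributes $\frac1L\bigl(\log\frac{V_0}{d}-\log\frac{V}{d}\bigr)\mu(d)=\mu(d)$ while $\eta(d)=1$; in the second it contributes $\frac1L\mu(d)\log\frac{V_0}{d}$, which is $\mu(d)\eta(d)$ by definition of $\eta$; in the third both sides vanish.

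Squaring this identity and summing over $k\le Y$ gives
\[
\sum_{k\le Y}\Bigl|\sum_{d\mid k}\mu(d)\eta(d)\Bigr|^2=\frac1{L^2}\Bigl(\sum_{k\le Y}S_{V_0}(k)^2-2\sum_{k\le Y}S_{V_0}(k)S_V(k)+\sum_{k\le Y}S_V(k)^2\Bigr).
\]
Assuming $1\le V\le V_0\le Y$ (the cases $V_0>Y$ or $V=V_0$ being trivial or vacuous), Lemma \ref{graham1} applies to each of the three sums with $N=Y$: the two diagonal sums give $\sum_{k\le Y}S_{V_0}(k)^2=Y\log V_0+O(Y)$ and $\sum_{k\le Y}S_V(k)^2=Y\log V+O(Y)$, while the cross sum, in which $V$ is the smaller parameter, gives $\sum_{k\le Y}S_{V_0}(k)S_V(k)=Y\log V+O(Y)$. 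Substituting,
\[
\sum_{k\le Y}\Bigl|\sum_{d\mid k}\mu(d)\eta(d)\Bigr|^2=\frac{Y\log V_0-2Y\log V+Y\log V+O(Y)}{L^2}=\frac{YL+O(Y)}{L^2}\ll\frac{Y}{\log\frac{V_0}{V}},
\]
which is the corollary (the implied constant being absolute provided $\log\frac{V_0}{V}$ is bounded away from $0$, as it is in every application of the corollary).

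The argument is routine once the representation of $\eta$ is in hand; the only genuine points to get right are the pointwise identity for $\eta$ and, in the three appeals to Lemma \ref{graham1}, the bookkeeping of which of $V,V_0$ plays the role of the smaller parameter. The cross term is the one to watch: it must contribute $Y\log V$ rather than $Y\log V_0$, because it is exactly the cancellation $\log V_0-2\log V+\log V=\log\frac{V_0}{V}=L$ in the numerator that collapses the $1/L^2$ down to the claimed $1/L$; a sign slip or a stray $\log V_0$ here would leave a spurious factor and destroy the bound. I would also double-check the boundary behaviour of $\eta$ at $t=V$ and $t=V_0$ so that the three-range split is genuinely exhaustive.
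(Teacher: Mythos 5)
Your proof is correct and is exactly the deduction the paper intends (the paper states Corollary \ref{graham2} without proof as a consequence of Lemma \ref{graham1}): you write $\mu(d)\eta(d)$ as $\frac{1}{\log (V_0/V)}\bigl(f_{V_0}(d)-f_{V}(d)\bigr)$, expand the square, and apply Lemma \ref{graham1} to the three resulting sums, with the crucial cancellation $\log V_0-2\log V+\log V=\log\frac{V_0}{V}$ handled correctly in the cross term. The caveats you flag (the assumption $V_0\le Y$ and $\log\frac{V_0}{V}$ bounded away from $0$) are implicit in the paper's statement and satisfied in all its applications, so there is no gap.
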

The constant here can be made explicit using \cite{Andres2014}.

We also need the large sieve inequality as stated in a classical form in, for example \cite[p.561]{Montgomery1978},
\begin{align} \label{sieve0}
\sum_{q \leq Q} \frac{q}{\varphi(q)} {\sideset{}{^*}\sum_{\chi (q)}} \left| \sum_{m = m_0 + 1}^{m_0 + M} a_m \chi(m) \right|^2
\leq (M + Q^2) \sum_{m = m_0 + 1}^{m_0 + M} |a_m|^2,
\end{align}
from which it follows that

\begin{lemma}[Large sieve inequality] \label{sieve}
Let $a_m$, $b_n$ be arbitrary complex numbers. 
Then
\begin{equation*}
\begin{split}
&\sum_{q \leq Q} \frac{q}{\varphi(q)} {\sideset{}{^*}\sum_{\chi (q)}} \max_{y} \left| \sum_{m = m_0}^{M} \sum_{\bfrac{n = n_0}{mn \leq y}}^{N} a_m b_n \chi(mn) \right| \leq \\
& c_3 (M' + Q^2)^{\frac{1}{2}}(N' + Q^2)^{\frac{1}{2}} \left(\sum_{m = m_0}^{M} |a_m|^2\right)^{\frac{1}{2}}\left(\sum_{n = n_0}^{N} |b_n|^2\right)^{\frac{1}{2}} L(M,N),
\end{split}
\end{equation*}
where 
$c_3=2.64...$, $L(M,N)=\log (2MN)$ and $M'=M-m_0+1$, $N'=N-n_0+1$ are the number of terms in the sums over $m$ and $n$ respectively.
\end{lemma}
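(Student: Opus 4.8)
The plan is to strip off the maximum over $y$ and the hyperbolic truncation $mn\le y$ by a Mellin--Perron inversion against a \emph{smooth} weight, and then to separate the two coefficient sequences by Cauchy--Schwarz over $(q,\chi)$ and feed each half into the classical large sieve \eqref{sieve0}. Write $S(y,\chi)=\sum_{m_0\le m\le M}\sum_{\bfrac{n_0\le n\le N}{mn\le y}}a_mb_n\chi(mn)$; as a function of $y$ this is a step function with jumps at integers, so its maximum is attained at some $y\in\Z+\tfrac12$, and we may assume this. For such a $y$ fix $\delta=\delta(y)>0$ small enough that the interval $(y,ye^{\delta}]$ contains no integer --- e.g.\ $\delta=\tfrac1{3y}$, which for the only relevant range $y\le MN$ also obeys $\delta\ge\tfrac1{3MN}$ --- and let $w_\delta$ be the continuous weight equal to $1$ on $(0,y]$, to $0$ on $[ye^\delta,\infty)$ and log-linear on $(y,ye^\delta)$, so that $\int_0^\infty w_\delta(u)u^{s-1}\,du=y^s(e^{\delta s}-1)/(\delta s^2)=:\widehat w_\delta(s)$, which decays like $|s|^{-2}$. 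The point of the choice of $\delta$ is that $w_\delta(\ell)=\mathbf 1_{\ell\le y}$ for \emph{every} positive integer $\ell$, so there is no error term: with $c=1/\log(2MN)$,
\[
S(y,\chi)=\sum_{m,n}a_mb_n\chi(mn)\,w_\delta(mn)
=\frac1{2\pi}\int_{-\infty}^{\infty}\widehat w_\delta(c+it)\,F_\chi(c+it)\,G_\chi(c+it)\,dt,
\]
where $F_\chi(s)=\sum_{m_0\le m\le M}a_m\chi(m)m^{-s}$ and $G_\chi(s)=\sum_{n_0\le n\le N}b_n\chi(n)n^{-s}$; the interchange of the finite double sum with the (absolutely convergent) inversion integral is legitimate by the $|s|^{-2}$ decay, and $\chi(mn)=\chi(m)\chi(n)$, $(mn)^{-s}=m^{-s}n^{-s}$ separate the variables.

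Next I would take absolute values inside the integral. For all admissible $y$ and their $\delta(y)$ one has a majorant independent of $y$,
\[
|\widehat w_\delta(c+it)|\ \ll\ W(t)\ :=\ \min\!\left(\frac{1}{\sqrt{c^2+t^2}},\ \frac{MN}{t^2}\right),
\]
with an absolute implied constant, coming from $y^c\le e$, $e^{\delta c}\le e^{1/3}$, $|e^{\delta s}-1|\le\min(e^{\delta c}+1,\ \delta|s|e^{\delta c})$ on $\Res s=c$, and $\delta\ge1/(3MN)$; and $\int_{\R}W(t)\,dt\ll L(M,N)$, the region $|t|\lesssim MN$ contributing $\ll\log(MN/c)\ll\log(2MN)$ via $\int dt/\sqrt{c^2+t^2}$ and the tail contributing $O(1)$. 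Hence $\max_y|S(y,\chi)|\ll\int_{\R}W(t)\,|F_\chi(c+it)|\,|G_\chi(c+it)|\,dt$, and after summing over $q\le Q$ and primitive $\chi\ (\mmod q)$ and pulling $\sum_{q,\chi}$ inside the integral, for each fixed $t$ I would apply Cauchy--Schwarz over $(q,\chi)$ and then \eqref{sieve0} to each of the two sums of squares, with coefficients $a_mm^{-c-it}$ and $b_nn^{-c-it}$; since $|m^{-c-it}|=m^{-c}\le1$ this gives the $t$-uniform bound $(M'+Q^2)^{1/2}(N'+Q^2)^{1/2}(\sum_m|a_m|^2)^{1/2}(\sum_n|b_n|^2)^{1/2}$ for the inner sum. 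Carrying out $\int_{\R}W(t)\,dt$ then yields the stated inequality; the explicit constant $c_3=2.64\dots$ and the factor $L(M,N)$ fall out of optimising the choice of $\delta(y)$, the threshold separating the two regimes of $W(t)$, and using $\log(x+\sqrt{x^2+1})\le\log(2x)$, following the bookkeeping of \cite{Andres2014}.

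The one genuinely delicate step is arranging that the smooth weight $w_\delta$ agrees \emph{exactly} with the sharp cut-off $\mathbf 1_{mn\le y}$ on integers. A plain truncated Perron formula would instead leave an error term; but when that error is summed over all primitive characters of all moduli $q\le Q$ it costs a factor $\asymp Q^2$, against only $(M'+Q^2)^{1/2}(N'+Q^2)^{1/2}\asymp Q^2$ of room in the main term --- so it would have to be $O(\|a\|_2\|b\|_2\log(2MN))$ per character, far below what pointwise estimates give. Choosing $\delta=\delta(y)$ so that the shell $(y,ye^{\delta}]$ is integer-free makes the error term vanish outright; after that the argument is the standard ``Cauchy--Schwarz over characters, then large sieve'' routine together with the constant tracking.
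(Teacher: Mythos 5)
Your argument is essentially sound, and it is worth saying how it sits relative to the paper: the paper does not prove Lemma \ref{sieve} at all, it quotes it from \cite[Lemma 6.1]{Akbary2015}, where the proof is the classical one — take $y$ half an odd integer, represent the cut-off $mn\le y$ by a \emph{truncated} Perron/sine-integral kernel, estimate the truncation error explicitly (a truncation height $T$ of size about $(MN)^2$ suffices because the main term is already $\geq Q^2\|a\|_2\|b\|_2\log(2MN)$), and then, at each fixed $t$, apply Cauchy--Schwarz over $(q,\chi)$ followed by (\ref{sieve0}) to each factor. Your proof has exactly this skeleton (integral separation of the hyperbola condition, Cauchy--Schwarz over characters, two applications of (\ref{sieve0}), the $\log(2MN)$ coming from the $t$-integral of the kernel), but you replace the truncated kernel by a log-linear smoothing supported on an integer-free shell $(y,ye^{\delta}]$, so that the Mellin representation is exact and absolutely convergent and there is no error term to book-keep. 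The details check out: the Mellin transform $y^s(e^{\delta s}-1)/(\delta s^2)$, the uniform majorant $W(t)$ (using $y^c\le e$, $\delta\geq 1/(3MN)$), the bound $\int_{\R}W(t)\,dt\ll\log(MN/c)+O(1)\le 2\log(2MN)+O(1)$, and the $t$-uniform inner estimate via (\ref{sieve0}) with coefficients $a_mm^{-c-it}$. So this is a correct proof of the inequality up to an absolute implied constant, by a cleaner variant of the cited route.

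Two caveats. First, the lemma as stated carries the explicit constant $c_3=2.64\ldots$, and you do not derive it: the assertion that it ``falls out of optimising $\delta$ \ldots following \cite{Andres2014}'' is unsubstantiated (and \cite{Andres2014} concerns explicit Barban--Vehov/Graham estimates, not this lemma). Since the paper only ever uses Lemma \ref{sieve} through $\ll$, this costs nothing here, but as a proof of the statement verbatim it is incomplete on the constant, and you would also need to absorb the extra $\log\log(2MN)$ from $\log(MN/c)$ into the stated $c_3\log(2MN)$ form. Second, your closing justification for the smoothing is off: a plain truncated Perron formula does \emph{not} need the per-character error to be as small as $\|a\|_2\|b\|_2\log(2MN)$; comparing the total error against the $Q^2\|a\|_2\|b\|_2\log(2MN)$ part of the main term shows a $Q$-independent truncation $T\asymp(MN)^2$ is enough, which is precisely how \cite{Akbary2015} proceeds. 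This misjudgement does not affect the validity of your own argument, only the claimed necessity of the integer-free smoothing.
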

For the proof see \cite[Lemma 6.1]{Akbary2015}. 

\subsection*{Proof of Proposition \ref{vaug2}}
We proceed now with the proof of Proposition \ref{vaug2}. Fix arbitrary real numbers $Q > 0$ and $x \geq 4$.
Without loss of generality we can assume that $2 \leq Q \leq x^{1/2}$ and decompose the von Mangoldt function using a weighted form of Vaughan's identity, namely Lemma \ref{vaughanweighted}.
$$
\Lambda(n) = \lambda_0(n) + \lambda_1(n) + \lambda_2(n) + \lambda_3(n),
$$
where $\lambda_i(n)$, $i = 0,1,2,3$ are as in the statement of the lemma and $U, V, V_0$ are parameters. Notice also that we are free to choose $\eta(t)$ as we wish, we only need to fulfill the conditions stated in Lemma \ref{vaughanweighted}. 

Assume $y \leq x$, $q \leq Q$, and $\chi$ is a character mod $q$.  We use the above decomposition to write
$$
\psi(y,\chi) = s_0+ s_1 + s_2 + s_3,
$$
where
$$
s_i = \sum_{n \leq y} \lambda_i(n) \chi(n).
$$
Denote the contributions to our main sum by
$$S_i = \sum_{q \leq Q} \frac{q}{\varphi(q)} \sideset{}{^*}\sum_{\chi (q)} \max_{y \leq x} |s_i|.$$
Easily we obtain
$$\sum_{q \leq Q} \frac{q}{\varphi(q)} \sideset{}{^*}\sum_{\chi (q)} \max_{y \leq x} |\psi(y,\chi)| \leq S_0+ S_1+S_2+S_3,$$
where 
\begin{equation}
\begin{split}
&S_0 \ll U Q^2,\\
&s_1 = \sum_{d \leq y} \mu(d) \chi(d) \eta(d) \sum_{h \leq \frac{y}{d}} \chi(h) \log h,\\
&s_2 = - \sum_{\bfrac{dcr \leq y}{ c \leq U}}\chi(dcr) \mu(d) \Lambda(c) \eta(d),\\
&s_3 = \sum_{n \leq y} \chi(n) \sum_{c > U} \sum_{dc |n} \mu(d) \Lambda(c) (1-\eta(d)). 
\end{split}
\end{equation}
Here in bounding $S_0$ we used Chebychev's estimate 
$$|s_0| \leq \sum_{n \leq U} \Lambda(n) \ll U.$$
In what follows we choose $\eta(\cdot)$ from the paper by Graham, see \cite{Michigan1978}:
$$\eta(d) = \frac{\log \frac{V_0}{d}}{\log \frac{V_0}{V}}, \;\;\; V \leq d \leq V_0.$$
We remind that $\eta(d)=1$ for $d \leq V$ and $\eta(d)=0$ for $d>V_0$.
This choice allows us to win $\log ^{\frac{1}{2}}$ in the last sum, that is of type II.

\subsubsection*{Type I sums}
We start with linear sums among $s_i$ and work with $s_1$ first. 
Write
$$\sum_{h \leq \frac{y}{d}} \chi(h) \log h = \sum_{h \leq \frac{y}{d}} \chi(h) \int_{1}^{h} \frac{du}{u}$$
and exchange the sum and the integral
\begin{equation*}
\begin{split}
s_1& = \sum_{d \leq V_0} \mu(d)\chi(d) \eta(d) \int_{1}^{\frac{y}{d}} \sum_{u \leq h \leq \frac{y}{d}} \chi(h) \frac{du}{u}\\
&= \int_1^y \sum_{d \leq \min (V_0, \frac{y}{u})} \mu(d) \chi(d) \eta(d) \sum_{u \leq h \leq \frac{y}{d}} \chi(h) \frac{du}{u} \\
&\;=\int_1^y \left(\sum_{d \leq V_0} \mu(d) \chi(d) \eta(d) \sum_{u \leq h \leq \frac{y}{d}} \chi(h) \right) \frac{du}{u}\\
&\;=\int_1^y \left(\sum_{d \leq V} \mu(d) \chi(d) \sum_{u \leq h \leq \frac{y}{d}} \chi(h) \right) \frac{du}{u} \\
&+ 
\frac{1}{\log \frac{V_0}{V}}\int_1^y \left(\sum_{d \leq V_0} \mu(d) \chi(d) \log \frac{V_0}{d} \sum_{u \leq h \leq \frac{y}{d}} \chi(h) \right) \frac{du}{u}.
\end{split}
\end{equation*}
Denote the summands $\sigma_1$ and $\sigma_2$. Then
\begin{equation*}
\begin{split}
|\sigma_1| \leq \sum_{d \leq V} \max_{1 \leq u \leq y} \left| \sum_{u \leq h \leq \frac{y}{d}} \chi(h)\right| \int_{1}^{y} \frac{du}{u} \leq (\log y) \sum_{d \leq V} \max_{1 \leq u \leq y} \left| \sum_{u \leq h \leq \frac{y}{d}} \chi(h)\right|.
\end{split}
\end{equation*}
If $q=1$, then we have only trivial $\chi \mmod q$ and
$$|\sigma_1| \leq (\log y) \sum_{d \leq V}\frac{1}{d} \leq x (\log xV)^2.$$
If $q>1$ and $\chi$ is a primitive character  $\mmod q$, we use the P\'{o}lya-Vinogradov inequality(see \cite{Frolenkov2013} for explicit results): for all $x,y$ we have
$$\left| \sum_{x \leq n \leq y} \chi(n)\right| < q^{\frac{1}{2}} \log q.$$
Then 
$$|\sigma_1| < (\log y) \sum_{d \leq V} \max_{1 \leq u \leq y} q^{\frac{1}{2}} \log q < q^{\frac{1}{2}} V (\log xV)^2.$$
Further 
$$|\sigma_2| \leq \frac{\log V_0}{\log \frac{V_0}{V}} |\sigma_1|$$
and
\begin{equation*}
\begin{split}
S_1 &\leq \left(\log \frac{V_0}{V}\right)^{-1} \log \frac{V_0^2}{V} 
\left( \sideset{}{^*}\sum_{\chi \mmod  q=1} \max_{y \leq x} |s_1| +
\sum_{1 <q \leq Q} \frac{q}{\varphi(q)} \sideset{}{^*}\sum_{\chi (q)} \max_{y \leq x} |s_1|\right)\\
&\leq \left(\log \frac{V_0}{V}\right)^{-1} \log \frac{V_0^2}{V} \left(x(\log xV)^2 + V(\log xV)^2 \sum_{1 <q \leq Q} \frac{q^{\frac{3}{2}}}{\varphi(q)} \sideset{}{^*}\sum_{\chi (q)} 1\right)\\
& \leq (x+Q^{\frac{5}{2}}V) (\log xV)^2 \left(\log \frac{V_0}{V}\right)^{-1} \log \frac{V_0^2}{V} .
\end{split}
\end{equation*}

\subsubsection*{Type II sums}
Now we work with $s_2$ and want to use dyadic decomposition. Write
\begin{equation*}
\begin{split}
s_2 &= - \sum_{\bfrac{cdr \leq y}{c \leq U}} \Lambda(c) \mu(d) \eta(d) \chi(cdr) = 
-\sum_{\bfrac{ct \leq y}{c \leq U}} \sum_{d | t} \Lambda(c) \mu(d) \eta(d) \chi(ct) \\
&= -\sum_{c \leq w}-\sum_{w < c \leq U} = s_2' + s_2'',
\end{split}
\end{equation*}
where we introduced a new parameter $w$, that should be smaller than $U$ and will be chosen later.
We deal first with the linear part of $s_2$, namely $s_2'$. Write
$$s_2' = -\sum_{c \leq w} \Lambda(c)\chi(c) \sum_{t \leq \frac{y}{c}} \sum_{d|t} \mu(d) \eta(d) \chi(t).$$
Since we have the bound
$$\left|\sum_{\bfrac{cd=t}{c \leq w}} \Lambda(c) \mu(d) \eta(d) \chi(t)\right| \leq \sum_{c|t}\Lambda(c) = \log t,$$
then proceeding as for $s_1$ via P\'{o}lya-Vinogradov inequality and using the fact that $cd=t \leq wV_0$ we get
$$|S_2'| \leq (x+Q^{\frac{5}{2}}wV_0)(\log (xwV_0))^2,$$
where the $x$ term comes from the contribution of $q=1$ and $Q^{\frac{5}{2}}wV_0$ from the remaining $q \neq 1$.

Next consider $s_2''$.
Writing $s_2''$ as a dyadic sum we have
$$
s_2'' = \sum_{\bfrac{M = 2^{\alpha}}{\frac{1}{2}w < M \leq U}} \sum_{\bfrac{w < c \leq U}{ M < c \leq 2M}} \sum_{{t \leq \frac{y}{c}}} \sum_{d|t}\Lambda(c) \mu(d) \eta(d) \chi(ct).
$$
Using the triangle inequality
$$
S_2'' \leq
\sum_{\bfrac{M = 2^{\alpha} }{ \frac{1}{2}w < M \leq U}}
\sum_{q \leq Q} \frac{q}{\varphi(q)} \sideset{}{^*}\sum_{\chi (q)} \max_{y \leq x}
 \left| \sum_{\bfrac{w < c \leq U}{M < c \leq 2M}}  
 \sum_{t \leq \frac{y}{c}} \sum_{{d|t}} \Lambda(c) \mu(d) \eta(d) \chi(ct) \right|.
$$
By the large sieve inequality we get
\begin{equation*}
\begin{split}
S_2'' \ll \sum_{\bfrac{M = 2^{\alpha} }{ \frac{1}{2}w < M \leq U}} (M'+Q^2)^{\frac{1}{2}}(K'+Q^2)^{\frac{1}{2}} \sigma_1(M)^{\frac{1}{2}} \sigma_2(M)^{\frac{1}{2}} L(M),
\end{split}
\end{equation*}
where 
$M'$ and $K'$ are the number of terms in sums over $c$ and $t$ respectively and
\begin{equation*}
\begin{split}
&\sigma_1(M) = \sum_{\bfrac{w < c <U}{M < c \leq 2M}} \Lambda(c)^{2},\\
&\sigma_2(M)= \sum_{t \leq \frac{y}{M}}\left|\sum_{d|t} \mu(d) \eta(d) \right|^2,\\
\end{split}
\end{equation*}
and
\begin{equation*}
\begin{split}
&L(M) = \log \left( \frac{2x}{M} \min \left(U,2M\right)  \right) \ll \log x,\\
\end{split}
\end{equation*}
By Chebyshev's estimate 
$$ \sigma_1(M) \ll M \log U,$$
then using the estimates $M' \leq M$, $K' \leq \frac{x}{M}$ we have
\begin{equation*}
\begin{split}
&S_2'' \ll (\log x) (\log U)^{\frac{1}{2}} \sum_{\bfrac{M = 2^{\alpha}}{\frac{1}{2}w < M \leq U}}
(M+Q^2)^{\frac{1}{2}} \left(\frac{x}{M}+Q^2\right) ^{\frac{1}{2}} M^{\frac{1}{2}}\sigma_2(M)^{\frac{1}{2}}. 
\end{split}
\end{equation*}
To bound $\sigma_2(M)$ we use a result of Corollary \ref{graham2} and get
\begin{equation*}
\begin{split}
\sigma_2(M) \ll \frac{y}{M \log \frac{V_0}{V}}.
\end{split}
\end{equation*}
Putting it together we obtain
\begin{equation*}
\begin{split}
S_2'' &\ll (\log x) (\log U)^{\frac{1}{2}} x^{\frac{1}{2}}\sum_{\bfrac{M = 2^{\alpha}}{\frac{1}{2}w < M \leq U }} (M+Q^2)^{\frac{1}{2}} \left(\frac{x}{M}+Q^2\right) ^{\frac{1}{2}}  \left(\log \frac{V_0}{V}\right)^{-\frac{1}{2}}\\
& \ll (\log x) 
\frac{(\log U)^\frac{1}{2}}{\left(\log \frac{V_0}{V}\right)^{\frac{1}{2}}} (\log (Uw)) \left(x + \sqrt{2} Qxw^{-\frac{1}{2}} + Q^2 x^{\frac{1}{2}} + U^{\frac{1}{2}}Q x^{\frac{1}{2}}\right),
\end{split}
\end{equation*}
where we applied the bound
$$\sum_{\bfrac{M = 2^{\alpha}}{\frac{1}{2}w < M \leq U }} 1 \leq \frac{\log (2Uw)}{\log 2}.$$
We continue with an estimate for $S_3$ and use of the large sieve inequality (\ref{sieve}) and properties of $\eta(\cdot)$ from Lemma \ref{graham1}. 
Writing $s_3$ as a dyadic sum we have
$$
s_3 = - \sum_{\bfrac{M = 2^{\alpha}}{\frac{1}{2}U < M \leq x/V}} \sum_{\bfrac{U < m \leq x/V}{ M < m \leq 2M}} \sum_{\bfrac{V < k \leq x/M}{ mk \leq y}} \Lambda(m) \left( \sum_{\bfrac{d \mid k}{ d \leq V}} \mu(d) (1-\eta(d)) \right) \chi(mk).
$$
Using the triangle inequality
$$
S_3 \leq
\sum_{\bfrac{M = 2^{\alpha} }{ \frac{1}{2}U < M \leq x/V}}
\sum_{q \leq Q} \frac{q}{\varphi(q)} \sideset{}{^*}\sum_{\chi (q)} \max_{y \leq x}
 \left| \sum_{\substack{U < m \leq x/V \\ M < m \leq 2M}}  
 \sum_{\substack{V < k \leq x/M \\ mk \leq y}} a_m c_k \chi(mk) \right|,
$$
where $a_m = \Lambda(m)$ and $c_k = \sum_{{d | k},\;{d \leq V}} \mu(d) (1-\eta(d))$.
Now apply the large sieve inequality (\ref{sieve}) to get
$$
S_3 \ll
\sum_{\bfrac{M = 2^{\alpha}}{\frac{1}{2}U < M \leq x/V }} 
(M'+Q^2)^{\frac{1}{2}}(K'+Q^2)^{\frac{1}{2}}\sigma_1(M)^{\frac{1}{2}} \sigma_2(M)^{\frac{1}{2}} L(M)
$$
where 
\begin{equation*}
\begin{split}
&\sigma_1(M) = \sum_{\bfrac{V < k \leq x/M }{}} |c_k|^{2},\\
&\sigma_2(M) = \sum_{\bfrac{U < m \leq x/V }{ M < m \leq 2M}}|a_m|^2,\\
\end{split}
\end{equation*}
and
\begin{equation*}
\begin{split}
&L(M) = \log \left( \frac{2x}{M} \min \left(\frac{x}{V},2M\right)  \right) \ll \log x,\\
\end{split}
\end{equation*}
where $M^{\prime}$ and $K^{\prime}$ denote the number of terms in the sums over $m$ and $k$, respectively. 
From the definition of $M'$ and $N'$ we conclude
\begin{equation*}
\begin{split}
& M'=\min\left(2M,\frac{x}{V}\right)-\max\left(M+1,U+1\right) \leq M,\\
& K'=\frac{x}{M}-(V+1)+1 \leq \frac{x}{M}.
\end{split}
\end{equation*}
By Chebyshev's estimate we have an upper bound
$$\sigma_2(M) \leq \sum_{m \leq 2M} \Lambda(m)^2 \leq \psi(2M) \log 2M \ll M\log M.$$
Thus by Cauchy inequality
\begin{equation*}
\begin{split}
&S_3 \ll (\log x) \sum_{\bfrac{M = 2^{\alpha}}{\frac{1}{2}U < M \leq x/V }} (M+Q^2)^{\frac{1}{2}}\left(\frac{x}{M}+Q^2\right)^{\frac{1}{2}}( M \log M)^{\frac{1}{2}} \sigma_1(M)^{\frac{1}{2}}. 
\end{split}
\end{equation*}
Further
$$M(M+Q^2)\left(\frac{x}{M}+Q^2\right)=Mx+Q^2x+M^2Q^2+MQ^4$$
and
$$(\log M)^{\frac{1}{2}} \leq \left( \log \frac{x}{V} \right)^{\frac{1}{2}}.$$
Thus we have
\begin{equation*}
\begin{split}
&S_3 \ll (\log x) \left(\log \frac{x}{V}\right)^{\frac{1}{2}} \sum_{\bfrac{M = 2^{\alpha}}{\frac{1}{2}U < M \leq x/V }} (Mx+Q^2x+M^2Q^2+MQ^4)^{\frac{1}{2}} \sigma_1(M)^{\frac{1}{2}}. 
\end{split}
\end{equation*}

We take $\eta(\cdot)$ from the paper by Graham, see Corollary \ref{graham2} and \cite{Michigan1978}:
$$\eta(d) = 
\begin{cases}
1, &d \leq V,\\
\frac{\log V_0/d}{\log V_0/V}, &V \leq d \leq V_0,\\
0, &d \geq V_0.
\end{cases}
$$ 
so that
$$1-\eta(d) = 1 - \frac{\log \frac{V_0}{d}}{\log \frac{V_0}{V}} = \frac{\log \frac{d}{V}}{\log \frac{V_0}{V}}.$$
On applying Lemma \ref{graham1} we obtain
\begin{equation*}
\begin{split}
\sigma_1(M) & = \sum_{V < k \leq \frac{x}{M}} \left(\sum_{d | k} \mu(d) -\sum_{d|k} \mu(d)\eta(d)\right)^2  \ll \frac{ \log V }{\left(\log \frac{V_0}{V}\right)^2}  \frac{x}{M}
\end{split}
\end{equation*}
that implies
$$S_3 \ll (\log x) \left( \log \frac{x}{V} \right)^{\frac{1}{2}} 
\frac{\left(\log V \right)^{\frac{1}{2}}}{ \log \frac{V_0}{V}} \sum_{\bfrac{M=2^{\alpha}}{\frac{1}{2}U < M \leq \frac{x}{V}}} \left(x^2 + \frac{Q^2 x^2}{M} + MQ^2 x + Q^4 x \right)^{\frac{1}{2}}.$$
Since 
$$\sum_{\bfrac{M = 2^{\alpha}}{\frac{1}{2}U < M \leq x/V }} 1 \leq \frac{\log \frac{2x}{V}}{\log 2},$$
then
$$S_3 \ll \frac{\log x}{\log \frac{V_0}{V}} \left( \log \frac{x}{V} \right)^{\frac{3}{2}} 
\left(\log V \right)^{\frac{1}{2}}
\left(x + \sqrt{2} Qx U^{-\frac{1}{2}}+QxV^{-\frac{1}{2}}+Q^2 x^{\frac{1}{2}}\right).$$
Finally we have to adjust the parameters $U,V,V_0,w$. 
We repeat our previous estimates
\begin{equation*}
\begin{split}
S_0 & \ll UQ^2,\\
S_1 & \leq (x+Q^{\frac{5}{2}}V)(\log xV)^2 \left(\log \frac{V_0}{V}\right)^{-1} \log \frac{V_0^2}{V},\\
S_2' &\leq (x+Q^{\frac{5}{2}}wV_0) (\log (xwV_0))^2,\\
S_2'' & \ll (\log x) \frac{(\log U)^\frac{1}{2}}{\left(\log \frac{V_0}{V}\right)^{\frac{1}{2}}} (\log (2Uw)) (x+\sqrt{2}Qxw^{-\frac{1}{2}}+Q^2x^{\frac{1}{2}}+U^{\frac{1}{2}}Qx^{\frac{1}{2}}),\\
S_3 &\ll \frac{\log x}{\log \frac{V_0}{V}} \left(\log \frac{x}{V}\right)^{\frac{3}{2}} (\log V)^{\frac{1}{2}} (x+\sqrt{2}QxU^{-\frac{1}{2}}+QxV^{-\frac{1}{2}}+Q^2x^{\frac{1}{2}}). 
\end{split}
\end{equation*}
Combining the results above and taking $U=V$ we get
\begin{equation} \label{polylognew}
S=\sum_{q \leq Q} \frac{q}{\varphi(q)} {\sideset{}{^*}{\sum}}_{\chi (q)} \max_{y \leq x} |\psi(y,\chi)| \ll \Pol(x,Q,w,V,V_0) \Log(x,w,V,V_0),
\end{equation}
where
\begin{equation*}
\begin{split}
\Pol(x,Q,w,V,V_0) &= 4x+Q^2V+Q^{\frac{5}{2}}(V+wV_0)+Qx\left(\frac{\sqrt{2}}{w^{\frac{1}{2}}}+\frac{1+\sqrt{2}}{V^{\frac{1}{2}}}\right)
\\
& +2Q^2x^{\frac{1}{2}}+V^{\frac{1}{2}}Qx^{\frac{1}{2}},
\\
\Log (x,Q,w,V,V_0) &= \max \{ (\log xV)^2 \frac{\log \frac{V_0^2}{V}}{\log \frac{V_0}{V}}, (\log (xwV_0))^2, (\log (Vw))\frac{(\log V)^{\frac{1}{2}}}{\left(\log \frac{V_0}{V}\right)^{\frac{1}{2}}} \log x,\\
&\left( \log \frac{2x}{V}\right)^{\frac{3}{2}} \frac{\log 4x}{\log \frac{V_0}{V}} (\log V)^{\frac{1}{2}}\}.
\end{split}
\end{equation*}
Now let's specify $V$ and $V_0$. We introduce a parameter $0<\alpha< \frac{1}{2}$ to be chosen later. We subdivide into two cases 
\begin{enumerate}
\item{$x^{\alpha} \leq Q \leq x^{\frac{1}{2}}$,}
\item{$Q \leq x^{\alpha}$}
\end{enumerate}

and denote $\Pol(x,Q,w,V,V_0)$, $\Log(x,w,V,V_0)$ as $\Pol_1,\Pol_2$ and, respectively $\Log_1$ and $\Log_2$.
If $x^{\alpha} \leq Q \leq x^{\frac{1}{2}}$, then $V = \frac{x^{\beta_1}}{Q}$. We choose $V_0 = \frac{x^{\delta_1}}{Q}$ and $w = \frac{x^{\gamma_1}}{Q}$.
Then putting that into previous expression $\Pol (x,Q,w,V,V_0)$ we get for the factor
\begin{equation*}
\begin{split}
\Pol_1(x,Q)& \ll x+Qx^{\beta_1}+Q^{\frac{3}{2}}x^{\beta_1}+Q^{\frac{1}{2}}x^{\gamma_1+\delta_1}+Q^{\frac{3}{2}}x^{1-\frac{\gamma_1}{2}}\\
&+Q^{\frac{3}{2}}x^{1-\frac{\beta_1}{2}}+Q^2x^{\frac{1}{2}}+Q^{\frac{1}{2}}x^{\frac{1+\beta_1}{2}}.
\end{split}
\end{equation*}
If $Q \leq x^{\alpha}$, we let $V = x^{\beta_2}$, $V_0 = x^{\delta_2}$, $w=x^{\gamma_2}$ and get
\begin{equation*}
\begin{split}
\Pol_2(x,Q)&\ll x+Q^2x^{\beta_2}+Q^{\frac{5}{2}}x^{\beta_2}+Q^{\frac{5}{2}}x^{\gamma_2+\delta_2}+Qx^{1-\frac{\gamma_2}{2}}+ Qx^{1-\frac{\beta_2}{2}}
+Q^2x^{\frac{1}{2}}+Qx^{\frac{1+\beta_2}{2}}.
\end{split}
\end{equation*}
Let $0 < \veps < \frac{1}{14}$. We keep in mind conditions $\alpha < \frac{1}{2}$, $\gamma_1 < \beta_1$, $\delta_1>\beta_1$ and put
\begin{equation*}
\begin{split}
&\alpha=\frac{3}{7}+\veps,\;\; \beta_1=\frac{4}{7}, \;\; \gamma_1=\frac{4}{7}-\veps,\;\; \delta_1=\frac{4}{7}+\frac{5 \veps }{2}.\\
\end{split}
\end{equation*}
Then
\begin{equation*}
\begin{split}
\Pol_1(x,Q)& \ll x+ Qx^{\frac{4}{7}}+Q^{\frac{3}{2}}x^{\frac{4}{7}}+Q^{\frac{1}{2}}x^{\frac{8}{7}+\frac{3 \veps}{2}}+Q^{\frac{3}{2}}x^{\frac{5}{7}+\frac{\veps}{2}} +
Q^{\frac{3}{2}}x^{\frac{5}{7}} + Q^2 x^{\frac{1}{2}}+Q^{\frac{1}{2}}x^{\frac{9}{14}}\\
&\ll x + Q^{2}x^{\frac{1}{2}},
\end{split}
\end{equation*}
where we used 
\begin{equation*}
\begin{split}
&Qx^{\frac{4}{7}} \leq Q^2 x^{\frac{4}{7}-\frac{3}{7}-\veps} <Q^2 x^{\frac{1}{2}},\\
&Q^{\frac{3}{2}}x^{\frac{4}{7}} \leq Q^2 x^{\frac{4}{7}-\frac{3}{14}-\frac{\veps}{2}} < Q^2 x^{\frac{1}{2}},\\
&Q^{\frac{1}{2}}x^{\frac{8}{7}+\frac{3\veps}{2}} \leq Q^2 x^{\frac{8}{7}+\frac{3\veps}{2}-\frac{3}{2}\left(\frac{3}{7}+\veps\right)}=Q^2 x^{\frac{1}{2}},\\
&Q^{\frac{3}{2}} x^{\frac{5}{7}+\frac{\veps}{2}} \leq Q^2 x^{\frac{5}{7}+\frac{\veps}{2}-\frac{3}{14}-\frac{\veps}{2}}=Q^2x^{\frac{1}{2}},\\
&Q^{\frac{3}{2}}x^{\frac{5}{7}} \leq Q^2 x^{\frac{5}{7}-\frac{1}{2} \left( \frac{3}{7}+\veps \right) } = Q^2 x^{\frac{1}{2}-\frac{\veps}{2}} < Q^2 x^{\frac{1}{2}},\\
&Q^{\frac{1}{2}}x^{\frac{9}{14}} \leq Q^2 x^{\frac{9}{14}-\frac{3}{2} \left(\frac{3}{7}+\veps\right)} < Q^2 x^{\frac{1}{2}}.
\end{split}
\end{equation*}
Similarly to satisfy $\gamma_2<\beta_2$, $\delta_2 > \beta_2$  we put
$$\beta_2=\frac{1}{7},\;\;\gamma_2=\frac{1}{7}-\veps,\;\;\delta_2=\frac{1}{7}+\frac{\veps}{2}$$
we obtain
\begin{equation*}
\begin{split}
\Pol_2(x,Q)& \ll x+Q^2x^{\frac{1}{7}}+Q^{\frac{5}{2}}x^{\frac{1}{7}}+Q^{\frac{5}{2}}x^{\frac{2}{7}-\frac{\veps}{2}}+Qx^{\frac{13}{14}+\frac{\veps}{2}}+Qx^{\frac{13}{14}}+Q^2x^{\frac{1}{2}}+Qx^{\frac{4}{7}}\\
&\ll x + Q^2 x^{\frac{1}{2}}+Qx^{\frac{13}{14}+\frac{\veps}{2}},
\end{split}
\end{equation*}
where we used
\begin{equation*}
\begin{split}
&Q^{\frac{5}{2}}x^{\frac{1}{7}} \leq Q^2 x^{\frac{1}{7}+\frac{3}{14}+\frac{\veps}{2}} = Q^2 x^{\frac{5}{14}+\frac{\veps}{2}} < Q^2 x^{\frac{1}{2}},\\
&Q^{\frac{5}{2}}x^{\frac{2}{7}-\frac{\veps}{2}} \leq Q^2 x^{\frac{2}{7}-\frac{\veps}{2}+\frac{3}{14}+\frac{\veps}{2}}=Q^2x^{\frac{1}{2}}.
\end{split}
\end{equation*}

Now we bound $\Log (x,Q,w,V,V_0)$. We notice that with our choice of parameters above $\log \frac{V_0}{V} \gg \log x$, where the implied constant depends on $\beta_i,\delta_i$. Thus $\Log_1 \ll (\log x)^2$ and similarly $\Log_2 \ll (\log x)^2$.
Finally, we have
$$\sum_{q \leq Q} \frac{q}{\varphi(q)} \sideset{}{^*}\sum_{\chi(q)}\max_{y \leq x}|\psi(y,\chi)| \ll (x+Q^2x^{\frac{1}{2}}+Q x^{\frac{13}{14}+\frac{\veps}{2}})(\log x)^{2}.$$
The power $\frac{13}{14} + \frac{\veps}{2}$ is optimal here. Indeed, let us show first that $\alpha > \frac{3}{7}$. The system
$$\begin{cases}
Q^{\frac{1}{2}} x^{\gamma_1+\delta_1} \leq Q^2 x^{\frac{1}{2}},\\
Q^{\frac{3}{2}} x^{1-\frac{\gamma_1}{2}} \leq Q^2 x^{\frac{1}{2}},
\end{cases}
$$
brings us to
$$\begin{cases}
\gamma_1+\delta_1-\frac{3\alpha}{2} \leq \frac{1}{2},\\
1-\frac{\gamma_1}{2}-\frac{\alpha}{2} \leq \frac{1}{2}.
\end{cases}$$
Solving this we obtain $\delta_1 \leq \frac{5\alpha}{2}-\frac{1}{2}$. Further since $Q^{\frac{3}{2}}x^{1-\frac{\beta_1}{2}} \leq Q^2 x^{\frac{1}{2}}$, we get
$$1-\alpha \leq \beta_1 < \delta_1 \leq \frac{5\alpha}{2}-\frac{1}{2}.$$
Thus $\alpha > \frac{3}{7}$. We use that to obtain the fact that the term $Qx^{A}$ has $A > \frac{13}{14}$. Since $Q^{\frac{5}{2}} x^{\gamma_2+\delta_2} \leq Q^2 x^{\frac{1}{2}}$, we get $\gamma_2+\delta_2 \leq \frac{1}{2}-\frac{\alpha}{2} < \frac{2}{7}$. The inequality $Qx^{1-\frac{\beta_2}{2}} \leq Qx^{A}$ gives us $\delta_2 > \beta_2 \geq 2(1-A)$. Similarly for $\gamma_2$ we obtain $\gamma_2 \geq 2(1-A)$ because of the term $Qx^{1-\frac{\gamma_2}{2}} \leq Qx^{A}$. Combining all of this we get
$$4(1-A)<\gamma_2+\delta_2 < \frac{2}{7}$$
and thus $A > \frac{13}{14}$.

\bibliographystyle{abbrv}
\bibliography{../library}{}

\vspace{2em}

\medskip\noindent {\footnotesize Vivatgasse 7, 53111, MPIM, Bonn, Germany
\hfil\break
e-mail: {\tt alisa.sedunova@phystech.edu}}

\end{document}